\documentclass[submission,copyright,creativecommons]{eptcs}
\pdfoutput=1
\usepackage{breakurl}             %
\usepackage{underscore}           %

\usepackage{macros}

\newcommand{\Run}{\colorPre{\operatorname{Run}}}

\usepackage[leftmargin=1em,rightmargin=1ex,vskip=1ex]{quoting}
\setlength{\belowcaptionskip}{-10pt}

\usepackage{enumitem}
\usepackage{lscape}

\title{Promonads and String Diagrams for Effectful Categories}
\author{Mario Rom\'an
  \institute{Tallinn University of Technology}
  \email{mroman@ttu.ee}
}

\makeatletter
\def\@copyrightspace{\relax}
\makeatother

\begin{document}
\maketitle

\begin{abstract}
  Premonoidal and Freyd categories are both generalized by non-cartesian Freyd categories: effectful categories.
  We construct string diagrams for effectful categories in terms of the string diagrams for a monoidal category with a freely added object.
  We show that effectful categories are pseudomonoids in a monoidal bicategory of promonads with a suitable tensor product.
\end{abstract}

\section{Introduction}

Category theory has two sucessful applications that are rarely combined: monoidal string diagrams \cite{joyal91} and functional programming semantics \cite{moggi91}.
We use string diagrams to talk about quantum transformations \cite{abramsky2009categorical}, relational queries \cite{bonchi18}, and even computability \cite{pavlovic13}; at the same time, proof nets and the geometry of interaction \cite{girard89,blute96} have been widely applied in computer science \cite{abramsky02,hoshino14}.
On the other hand, we traditionally use monads and comonads, Kleisli categories and \premonoidalCategories{} to explain effectful functional programming \cite{hughes00,jacobs09,moggi91,power99:freyd,uustalu2008comonadic}. Even if we traditionally employ Freyd categories with a cartesian base \cite{power02}, we can also consider non-cartesian Freyd categories \cite{staton13}, which we call \emph{effectful categories}.

\vspace{-1em}
\paragraph{Contributions.} These applications are well-known. However, some foundational results in the intersection between string diagrams, \premonoidalCategories{} and \effectfulCategories{} are missing in the literature. This manuscript contributes two such results.
\begin{itemize}
  \item
    We introduce string diagrams for effectful categories.
    Jeffrey \cite{jeffrey97} was the first to preformally employ string diagrams of \premonoidalCategories{}.
    His technique consists in introducing an extra wire -- which we call the \emph{runtime} -- that prevents some morphisms from interchanging.
    We promote this preformal technique into a result about the construction of free premonoidal, \Freyd{} and \effectfulCategories{}:
    the free premonoidal category can be constructed in terms of the free monoidal category with an extra wire.

    Our slogan, which constitutes the statement of \Cref{theorem:runtime-as-a-resource}, is
    \begin{quote}
      \emph{``Premonoidal categories are Monoidal categories with a Runtime.''}
    \end{quote}

  \item
    We prove that \effectfulCategories{} are \promonad{} \pseudomonoids{}.
    Promonads are the profunctorial counterpart of monads; they are used to encode effects in functional programming (where they are given extra properties and called \emph{arrows} \cite{hughes00}).
    We claim that, in the same way that monoidal categories are pseudomonoids in the bicategory of categories \cite{street97}, premonoidal \effectfulCategories{} are pseudomonoids in a monoidal bicategory of promonads.
    This result justifies the role of \effectfulCategories{} as a foundational object.
\end{itemize}

\subsection{Synopsis}
\Cref{sec:premonoidalcats,sec:effectfulcats} contain mostly preliminary material on premonoidal, Freyd and effectful categories.
Our first original contribution is in \Cref{sec:runtime}; we prove that premonoidal categories are monoidal categories with runtime (\Cref{theorem:runtime-as-a-resource}).
\Cref{sec:promonads} makes explicit the well-known theory of profunctors, promonads and identity-on-objects functors.
In \Cref{sec:puretensor}, we introduce the pure tensor of promonads. We use it in \Cref{sec:pseudomonoids} to prove our second main contribution (\Cref{th:freydpseudomonoid}).

\section{Premonoidal and Effectful Categories}
\label{sec:premonoidal}

\subsection{Premonoidal categories}
\label{sec:premonoidalcats}
\PremonoidalCategories{} are monoidal categories without the \emph{interchange law},
$(\fm \tensor \im) \comp (\im \tensor \gm) \neq (\im \tensor \gm) \comp (\fm \tensor \im)$.
This means that we cannot tensor any two arbitrary morphisms, $(\fm \tensor \gm)$, without explicitly stating which one is to be composed first, $(\fm \tensor \im) \comp (\im \tensor \gm)$ or $(\im \tensor \gm) \comp (\fm \tensor \im)$, and the two compositions are not equivalent (\Cref{fig:noninterchange}).
  \begin{figure}[H]
    \centering
    \includegraphics[scale=0.6]{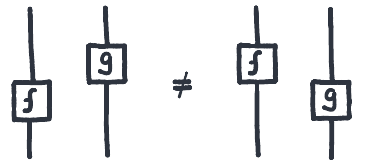}
    \caption{The interchange law does not hold in a premonoidal category.}
    \label{fig:noninterchange}
  \end{figure}

In technical terms, the tensor of a \premonoidalCategory{} $(\tensor) \colon \catC \times \catC \to \catC$ is not a functor, but only what is called a \emph{\sesquifunctor{}}: independently functorial on each variable. Tensoring with any identity is itself a functor $(\bullet \tensor \im) \colon \catC \to \catC$, but there is no functor $(\bullet \tensor \bullet) \colon \catC \times \catC \to \catC$.

A good motivation for dropping the interchange law can be found when describing transformations that affect some global state.
These effectful processes should not interchange in general, because the order in which we modify the global state is meaningful.
For instance, in the Kleisli category of the \emph{writer monad}, $(\Sigma^{\ast} \times \bullet) \colon \Set \to \Set$ for some alphabet $\Sigma \in \Set$, we can consider the function $\mathsf{print} \colon \Sigma^{\ast} \to \Sigma^{\ast} \times 1$. The order in which we ``print'' does matter (\Cref{fig:writermonad}).
  \begin{figure}[H]
    \centering
    \includegraphics[scale=0.6]{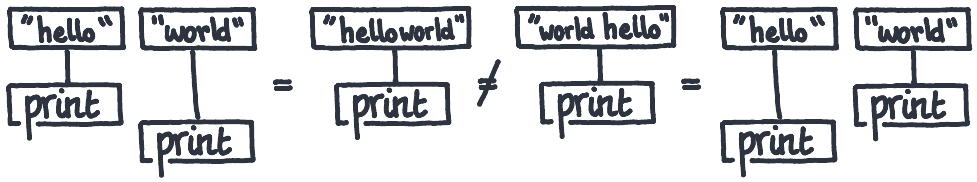}
    \caption{Writing does not interchange.}
    \label{fig:writermonad}
  \end{figure}

  Not surprisingly, the paradigmatic examples of premonoidal categories are the Kleisli categories of Set-based monads $T \colon \Set \to \Set$  (more generally, of strong monads), which fail to be monoidal unless the monad itself is commutative \cite{guitart1980tenseurs,power97,power99:freyd,hedgesblog2019}.
  Intuitively, the morphisms are ``effectful'', and these effects do not always commute.

However, we may still want to allow some morphisms to interchange.
For instance, apart from asking the same associators and unitors of monoidal categories to exist, we ask them to be \emph{central}: that means that they interchange with any other morphism.
This notion of centrality forces us to write the definition of premonoidal category in two different steps: first, we introduce the minimal setting in which centrality can be considered (\emph{binoidal} categories \cite{power99:freyd}) and then we use that setting to bootstrap the full definition of premonoidal category with central coherence morphisms.

\begin{definition}[Binoidal category]
  \defining{linknoidalcategory}{}
  A \emph{binoidal category} is a category $\catC$ endowed with an object $I \in \catC$ and an object $A \tensor B$ for each $A \in \catC$ and $B \in \catC$.
  There are functors
  $(A \tensor \bullet) \colon \catC \to \catC
    \mbox{, and }
    (\bullet \tensor B) \colon \catC \to \catC$
  that coincide on $(A \tensor B)$, even if $(\bullet \tensor \bullet)$ is not itself a functor.
\end{definition}

Again, this means that we can tensor with identities (whiskering), functorially; but we cannot tensor two arbitrary morphisms: the interchange law stops being true in general.
The \emph{centre}, $\zentre(\catC)$, is the wide subcategory of morphisms that do satisfy the interchange law with any other morphism.
That is, $f \colon A \to B$ is \emph{central} if, for each $g \colon A' \to B'$,
\begin{align*}
  (f \tensor \im_{A'}) \comp (\im_{B} \tensor g)
   = (\im_{A} \tensor g) \comp (f \tensor \im_{B'}), \mbox{ and }
  (\im_{A'} \tensor f) \comp (g \tensor \im_{B})
   = (g \tensor \im_{A}) \comp (\im_{B'} \tensor f).
\end{align*}

\begin{definition}
  \defining{linkpremonoidalcategory}
  A \emph{premonoidal category} is a \noidalCategory{} $(\catC,\tensor,I)$ together with the following coherence isomorphisms
  $\alpha_{A,B,C} \colon A \tensor (B \tensor C) \to (A \tensor B) \tensor C$, $\rho_{A} \colon A \tensor I \to A$ and $\lambda_{A} \colon I \tensor A \to A$ which are central, natural \emph{separately at each given component}, and satisfy the pentagon and triangle equations.

  A \premonoidalCategory{} is \emph{strict} when these coherence morphisms are identities.
  A \premonoidalCategory{} is moreover \emph{symmetric} when it is endowed with a coherence isomorphism $\sigma_{A,B} \colon A \tensor B \to B \tensor A$ that is central and natural at each given component, and satisfies the symmetry condition and hexagon equations.
\end{definition}

\begin{remark}
  The coherence theorem of monoidal categories still holds for premonoidal categories: every premonoidal is equivalent to a strict one.
  We will construct the free strict premonoidal category using string diagrams.
  However, the usual string diagrams for monoidal categories need to be restricted: in premonoidal categories, we cannot consider two morphisms in parallel unless any of the two is \emph{central}.
\end{remark}

\subsection{Effectful and Freyd categories}
\label{sec:effectfulcats}

\PremonoidalCategories{} immediately present a problem: what are the strong premonoidal functors?
If we want them to compose, they should preserve centrality of the coherence morphisms (so that the central coherence morphisms of $F \comp G$ are these of $F$ after applying $G$), but naively asking them to preserve all central morphisms rules out important examples~\cite{staton13}.
The solution is to explicitly choose some central morphisms that represent ``pure'' computations.
These do not need to form the whole centre: it could be that some morphisms considered  \emph{effectful} just ``happen'' to fall in the centre of the category, while we do not ask our functors to preserve them.
This is the well-studied notion of a \emph{non-cartesian Freyd category}, which we shorten to \emph{effectful monoidal category} or \emph{effectful category}.\footnote{The name ``Freyd category'' sometimes assumes cartesianity of the pure morphisms, but it is also used for the general case.
Choosing to call ``effectful categories'' to the general case and reserving the name ``Freyd categories'' for the cartesian ones avoids this clash of nomenclature.
There exists also the more fine-grained notion of ``Cartesian effect category'' \cite{dumas11}, which generalizes Freyd categories and may further justify calling ``effectful category'' to the general case.}

\EffectfulCategories{} are \premonoidalCategories{} endowed with a chosen family of central morphisms.
These central morphisms are called \pure{} morphisms, constrasting with the general, non-central, morphisms that fall outside this family, which we call \effectful{}.

\begin{definition}
  \defining{linkeffectful}{}
  \defining{linkfreyd}{}
  An \emph{effectful category}
  is an identity-on-objects functor $\colorMon{\baseV} \to \colorPre{\catC}$ from a monoidal category $\baseV$ (the \pure{} morphisms, or ``values'') to a premonoidal category $\catC$ (the \effectful{} morphisms, or ``computations''), that strictly preserves all of the premonoidal structure and whose image is central. It is \emph{strict} when both are. A \emph{Freyd category} \cite{levy2004} is an effectful category where the \pure{} morphisms form a cartesian monoidal category.
\end{definition}

Effectful categories solve the problem of defining premonoidal functors: a functor between effectful categories needs to preserve only the \pure{} morphisms.
We are not losing expressivity: premonoidal categories are effectful with their centre, $\colorMon{\zentre}(\catC) \to \catC$.
From now on, we study \effectfulCategories{}.

\begin{definition}[Effectful functor]
  \defining{linkeffectfulfunctor}
  Let $\cbaseV \to \ccatC$ and $\cbaseW \to \ccatD$ be effectful categories.
  An \emph{effectful functor} is a quadruple  $(F,F_{0},\varepsilon, \mu)$ consisting of a functor $F \colon \ccatC \to \ccatD$ and a functor $F_{0} \colon \cbaseV \to \cbaseW$ making the square commute,
  and two natural and pure isomorphisms $\varepsilon \colon J \cong F(I)$ and $\mu \colon F(A \otimes B) \cong F(A) \otimes F(B)$ such that they make $F_{0}$ a monoidal functor. It is \emph{strict} if these are identities.
\end{definition}

When drawing string diagrams in an effectful category, we shall use two different colours to declare if we are depicting either a value or a computation (\Cref{fig:prehelloworld}).

  \begin{figure}[H]
    \centering
    \includegraphics[scale=0.6]{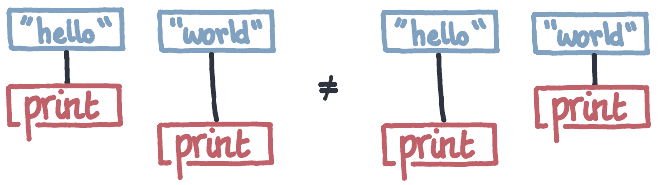}
    \caption{``Hello world'' is not ``world hello''.}
    \label{fig:prehelloworld}
  \end{figure}

Here, the values \colorMon{``hello''} and \colorMon{``world''} satisfy the interchange law as in an ordinary monoidal category. However, the effectful computation \colorPre{``print''} does not need to satisfy the interchange law.
String diagrams like these can be found in the work of Alan Jeffrey \cite{jeffrey97}.
Jeffrey presents a clever mechanism to graphically depict the failure of interchange: all effectful morphisms need to have a control wire as an input and output.
This control wire needs to be passed around to all the computations in order, and it prevents them from interchanging.

  \begin{figure}[H]
    \centering
    \includegraphics[scale=0.6]{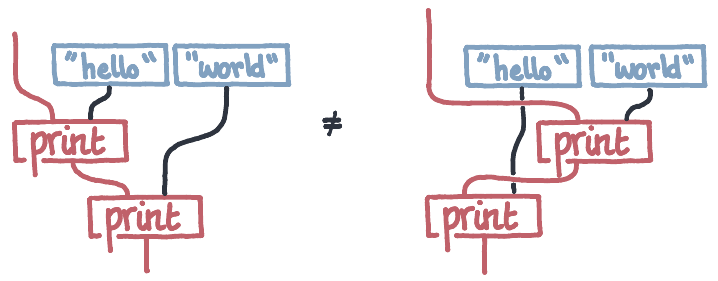}
    \caption{An extra wire prevents interchange.}
    \label{fig:helloworld}
  \end{figure}

A common interpretation of monoidal categories is as theories of resources.
We can interpret premonoidal categories as monoidal categories with an extra resource -- the \colorPre{``runtime''} -- that needs to be passed to all computations.
The next section promotes Jeffrey's observation into a theorem.

\subsection{Premonoidals are monoidals with runtime}
\label{sec:runtime}

String diagrams rely on the fact that the morphisms of the monoidal category freely generated over a \polygraph{} of generators are string diagrams on these generators, quotiented by topological deformations \cite{joyal91}.
We justify string diagrams for premonoidal categories by proving that the freely generated effectful category over a pair of polygraphs (for pure and effectful generators, respectively) can be constructed as the freely generated monoidal category over a particular polygraph that includes an extra wire.

\begin{definition}
  \defining{linkpolygraph}{}

  A \emph{\polygraph{}}  $\hyG$ (analogue of a \emph{multigraph} \cite{shulman2016categorical}) is given by a set of objects, $\hyGobj$, and a set of arrows $\hyG(A_{0},\dots,A_{n};B_{0},\dots,B_{m})$ for any two sequences of objects $A_{0},\dots,A_{n}$ and $B_{0},\dots,B_{m}$.
  A morphism of \polygraphs{} $f \colon \hyG \to \hyH$ is a function between their object sets, $f_{\mathrm{obj}} \colon \hyGobj \to \hyHobj$,
  and a function between their corresponding morphism sets,
  \[f_{A_{0},\dots,A_{n}; B_{0},\dots,B_{n}} \colon \hyG(A_{0},\dots,A_{n};B_{0},\dots,B_{m}) \to \hyH(f_{\mathrm{obj}}(A_{0}),\dots,f_{\mathrm{obj}}(A_{n});f_{\mathrm{obj}}(B_{0}),\dots,f_{\mathrm{obj}}(B_{m})).\]
    \defining{linkpolygraphcouple}{}A \emph{\polygraphCouple{}} is a pair of \polygraphs{} $(\hyV,\hyG)$ sharing the same objects, $\hyV_{\mathrm{obj}} = \hyG_{\mathrm{obj}}$.
  A morphism of \polygraphCouples{} $(u,f) \colon (\hyV,\hyG) \to (\hyW,\hyH)$ is a pair of morphisms of \polygraphs{}, $u \colon \hyV \to \hyW$ and $f \colon \hyG \to \hyH$, such that they coincide on objects, $f_{\mathrm{obj}} = u_{\mathrm{obj}}$.
\end{definition}

\begin{remark}
  There exists an adjunction between \polygraphs{} and strict monoidal categories.
  Any monoidal category $\catC$ can be seen as a \polygraph{} $\mathcal{U}_\catC$ where the edges $\mathcal{U}_{\catC}(A_{0},\dots, A_{n};B_{0}, \dots, B_{m})$ are the morphisms $\catC(A_{0} \tensor \dots \tensor A_{n},B_{0} \tensor \dots \tensor B_{m})$, and we forget about composition and tensoring.
  Given a \polygraph{} $\hyG$, the free strict monoidal category $\MON(\hyG)$ is the strict monoidal category that has as morphisms the string diagrams over the generators of the \polygraph{}.

  We will construct a similar adjunction between \polygraphCouples{} and \effectfulCategories{}. Let us start by formally adding the runtime to a free monoidal category.
\end{remark}

\begin{definition}[Runtime monoidal category]
  \defining{linkruntimepolygraph}{}
  Let $(\hyV,\hyG)$ be a \polygraphCouple{}.
  Its \emph{runtime monoidal category}, $\MONRUN(\hyV,\hyG)$, is the monoidal category freely generated from adding an extra object -- the runtime, $\R$ -- to the input and output of every effectful generator in $\hyG$ (but not to those in $\hyV$), and letting that extra object be braided with respect to every other object of the category.

  In other words, it is the monoidal category freely generated by the following \polygraph{}, $\Run(\hyV,\hyG)$,
  (\Cref{fig:rungen}),
  assuming $A_{0},\dots,A_{n}$ and $B_{0},\dots,B_{m}$ are distinct from $\R$
  \begin{itemize}
    \item $\obj{\Run(\hyV,\hyG)} = \obj{\hyG} + \{ \R \} = \obj{\hyV} + \{ \R \}$,
    \item $\Run(\hyV,\hyG)(\R,A_{0},\dots,A_{n};\R, B_{0},\dots,B_{n}) = \hyG(A_{0},\dots,A_{n}; B_{0},\dots,B_{n})$,
    \item $\Run(\hyV,\hyG)(A_{0},\dots,A_{n}; B_{0},\dots,B_{n}) = \hyV(A_{0},\dots,A_{n}; B_{0},\dots,B_{n})$,
    \item $\Run(\hyV,\hyG)(\R,A_{0};A_{0},\R) = \Run(\hyV,\hyG)(A_{0},\R;\R,A_{0}) = \{\sigma\}$,
  \end{itemize}
  with $\Run(\hyV,\hyG)$ empty in any other case, and quotiented by the braiding axioms for $\R$ (\Cref{fig:runaxiom}).
  \begin{figure}[H]
    \centering
    \includegraphics[scale=0.6]{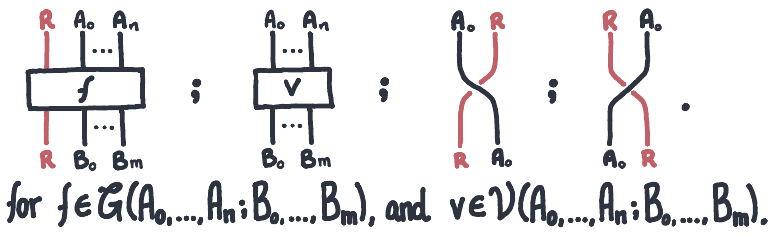}
    \caption{Generators for the runtime monoidal category.}
    \label{fig:rungen}
  \end{figure}
  \begin{figure}[H]
    \centering
    \includegraphics[scale=0.6]{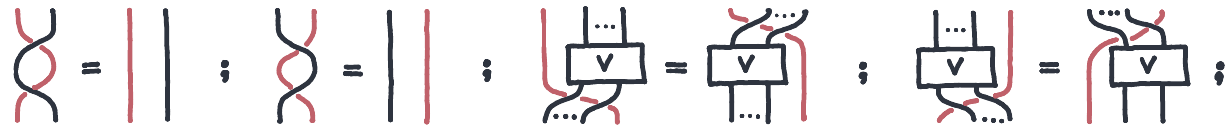}
    \caption{Axioms for the runtime monoidal category.}
    \label{fig:runaxiom}
  \end{figure}
\end{definition}

Somehow, we are asking the runtime $\R$ to be in the Drinfeld centre \cite{drinfeld10} of the monoidal category.
The extra wire that $\R$ provides is only used to prevent interchange, and so it does not really matter where it is placed in the input and the output.
We can choose to always place it on the left, for instance -- and indeed we will be able to do so -- but a better solution is to just consider objects ``up to some runtime braidings''.
This is formalized by the notion of \emph{braid clique}.

\begin{definition}[Braid clique]
  Given any list of objects $A_{0},\dots,A_{n}$ in $\obj{\hyV} = \obj{\hyG}$,
  we construct a \emph{clique}  \cite{trimble:coherence,shulman20182} in the category $\MONRUN(\hyV,\hyG)$: we consider the objects, $A_{0} \tensor \dots \tensor \R_{(i)} \tensor \dots \tensor A_{n}$, created by inserting the runtime $\R$ in all of the possible $0 \leq i \leq n+1$ positions; and we consider the family of commuting isomorphisms constructed by braiding the runtime,
  \[ \sigma_{i,j} \colon A_{0} \tensor \dots \tensor \R_{(i)} \tensor \dots \tensor A_{n} \to  A_{0} \tensor \dots \tensor \R_{(j)} \tensor \dots \tensor A_{n}.\]
  We call this the \emph{braid clique}, $\Braid(A_{0},\dots,A_{n})$, on that list.
\end{definition}

\begin{definition}
  A \emph{braid clique morphism}, $f \colon \Braid(A_{0},\dots,A_{n}) \to \Braid(B_{0},\dots,B_{m})$ is a family of morphisms in the runtime monoidal category, $\MONRUN(\hyV,\hyG)$, from each of the objects of first clique to each of the objects of the second clique,
  \[f_{ik} \colon A_{0} \tensor \dots \tensor \R_{(i)} \tensor \dots \tensor A_{n} \to
    B_{0} \tensor \dots \tensor \R_{(k)} \tensor \dots \tensor B_{m},\]
  that moreover commutes with all braiding isomorphisms, $f_{ij} \comp \sigma_{jk} = \sigma_{il} \comp f_{}$.
\end{definition}

A braid clique morphism  $f \colon \Braid(A_{0},\dots,A_{n}) \to \Braid(B_{0},\dots,B_{m})$ is fully determined by \emph{any} of its components, by pre/post-composing it with braidings.
In particular, a braid clique morphism is always fully determined by its leftmost component $f_{00} \colon \R \tensor A_{0} \tensor \dots \tensor A_{n} \to \R \tensor B_{0} \tensor \dots \tensor B_{m}$.

\begin{lemma}\label{lemma:eff-is-premonoidal}
  Let $(\hyV,\hyG)$ be a \polygraphCouple{}.
  There exists a \premonoidalCategory{}, $\EFF(\hyV,\hyG)$, that has objects the braid cliques, $\Braid(A_{0},\dots,A_{n})$, in $\MONRUN(\hyV,\hyG)$, and as morphisms the braid clique morphisms between them. 
  See Appendix.%
\end{lemma}

\begin{lemma}
  \label{lemma:identity-mon-eff}
  Let $(\hyV,\hyG)$ be a \polygraphCouple{}.
  There exists an identity-on-objects functor \(\MON(\hyV) \to \EFF(\hyV,\hyG)\) that strictly preserves the premonoidal structure and whose image is central. 
  See Appendix.%
\end{lemma}

\begin{lemma}
  \label{lemma:freeness}
  Let $(\hyV,\hyG)$ be a \polygraphCouple{} and consider the \effectfulCategory{} determined by \(\MON(\hyV) \to \EFF(\hyV,\hyG)\).
  Let $\cbaseV \to \ccatC$ be a strict effectful category endowed with a \polygraphCouple{} morphism $F \colon (\hyV,\hyG) \to \mathcal{U}(\cbaseV,\ccatC)$.
  There exists a unique strict effectful functor from $(\MON(\hyV) \to \EFF(\hyV,\hyG))$ to $(\cbaseV \to \ccatC)$ commuting with $F$ as a \polygraphCouple{} morphism. 
  See Appendix.%
\end{lemma}

\begin{theorem}[Runtime as a resource]
  \label{theorem:runtime-as-a-resource}
  The free strict effectful category over a \polygraphCouple{} $(\hyV,\hyG)$ is $\MON(\hyV) \to \EFF(\hyV,\hyG)$. Its morphisms $A \to B$ are in bijection with the morphisms $\R \tensor A \to \R \tensor B$ of the runtime monoidal category,
  \[\EFF(\hyV,\hyG)(A,B) \cong \MONRUN(\hyV,\hyG)(\R \tensor A, \R \tensor B).\]
\end{theorem}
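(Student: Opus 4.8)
The plan is to read the theorem as the packaging of the three preceding lemmas into a single freeness statement, together with one genuinely new verification: the hom-set bijection. First I would assemble the claim that $\MON(\hyV) \to \EFF(\hyV,\hyG)$ is the free strict effectful category. By \Cref{lemma:eff-is-premonoidal} the construction $\EFF(\hyV,\hyG)$ is a premonoidal category, and by \Cref{lemma:identity-mon-eff} the identity-on-objects functor $\MON(\hyV) \to \EFF(\hyV,\hyG)$ strictly preserves the premonoidal structure and has central image; together these exhibit $\MON(\hyV) \to \EFF(\hyV,\hyG)$ as a strict effectful category. There is an evident unit, a \polygraphCouple{} morphism $\eta \colon (\hyV,\hyG) \to \mathcal{U}(\MON(\hyV) \to \EFF(\hyV,\hyG))$ sending each generator to itself. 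Then \Cref{lemma:freeness} is precisely the universal property: every \polygraphCouple{} morphism into the underlying couple of a strict effectful category factors uniquely through $\eta$ by a strict effectful functor. This exhibits $\MON(\hyV) \to \EFF(\hyV,\hyG)$ as the value at $(\hyV,\hyG)$ of the left adjoint to the forgetful functor $\mathcal{U}$, which is the meaning of ``free strict effectful category''.

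Next I would establish the bijection on hom-sets. Under the description of \Cref{lemma:eff-is-premonoidal}, an object $A = A_0 \tensor \dots \tensor A_n$ of $\EFF(\hyV,\hyG)$ is the braid clique $\Braid(A_0,\dots,A_n)$, whose leftmost vertex is the object $\R \tensor A_0 \tensor \dots \tensor A_n = \R \tensor A$ of $\MONRUN(\hyV,\hyG)$. A morphism in $\EFF(\hyV,\hyG)(A,B)$ is a braid clique morphism $f \colon \Braid(A_0,\dots,A_n) \to \Braid(B_0,\dots,B_m)$, and I would send it to its leftmost component $f_{00} \colon \R \tensor A \to \R \tensor B$. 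As observed immediately after the definition of braid clique morphism, such a morphism is fully determined by $f_{00}$ (every other component is recovered by pre- and post-composition with runtime braidings), so this assignment is injective.

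For surjectivity I would exhibit the inverse explicitly. Given any $g \colon \R \tensor A \to \R \tensor B$ in $\MONRUN(\hyV,\hyG)$, define a candidate family by conjugating with the clique braidings, $f_{ik} := \sigma_{0k} \comp g \comp \sigma_{i0}$, where $\sigma_{i0}$ moves $\R$ from position $i$ to position $0$ in the source clique and $\sigma_{0k}$ moves it from $0$ to $k$ in the target. The key check is that this family is well-defined and genuinely constitutes a braid clique morphism, namely that it commutes with all the runtime braidings; this follows because the braidings of a braid clique form a commuting family, which is exactly the coherence guaranteed by placing $\R$ in the Drinfeld centre via the axioms of \Cref{fig:runaxiom}. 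The two assignments $f \mapsto f_{00}$ and $g \mapsto (f_{ik})$ are mutually inverse, since restricting the conjugated family to $i = k = 0$ returns $g$, while reconstructing from $f_{00}$ returns the original family by full determination. This yields the claimed bijection $\EFF(\hyV,\hyG)(A,B) \cong \MONRUN(\hyV,\hyG)(\R \tensor A, \R \tensor B)$.

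The main obstacle is the well-definedness and coherence of the inverse map: one must verify that conjugation by runtime braidings is independent of the chosen braiding path and produces a family compatible with every clique isomorphism, and this rests entirely on the coherence of the runtime braiding encoded by \Cref{fig:runaxiom}. A lighter secondary check is that the hom-set bijection is compatible with identities, composition, and the tensor product, so that it is not merely a bijection of sets but faithfully reflects the construction of $\EFF(\hyV,\hyG)$ as sitting inside $\MONRUN(\hyV,\hyG)$; this compatibility is what makes the slogan that ``premonoidal categories are monoidal categories with a runtime'' precise.
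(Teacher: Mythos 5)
Your proposal follows the paper's proof exactly: the paper likewise assembles \Cref{lemma:eff-is-premonoidal}, \Cref{lemma:identity-mon-eff}, and \Cref{lemma:freeness} to establish that $\MON(\hyV) \to \EFF(\hyV,\hyG)$ is a strict effectful category with the required universal property. Your explicit treatment of the hom-set bijection via leftmost components and conjugation by runtime braidings is a welcome elaboration of a step the paper leaves implicit in the remark following the definition of braid clique morphism (and in the clique proposition of the appendix), but it is the same argument.
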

\begin{proof}
  We must first show that $\MON(\hyV) \to \EFF(\hyV,\hyG)$ is an effectful category.
  The first step is to see that $\EFF(\hyV,\hyG)$ forms a premonoidal category (\Cref{lemma:eff-is-premonoidal}).
  We also know that $\MON(\hyV)$ is a monoidal category: in fact, a strict, freely generated one.
  There exists an identity on objects functor, \(\MON(\hyV) \to \EFF(\hyV,\hyG)\), that strictly preserves the premonoidal structure and centrality (\Cref{lemma:identity-mon-eff}).

  Let us now show that it is the free one over the \polygraphCouple{} $(\hyV,\hyG)$.
  Let $\cbaseV \to \ccatC$ be an effectful category, with an \polygraphCouple{} map $F \colon (\hyV,\hyG) \to \mathcal{U}(\cbaseV,\ccatC)$.
  We can construct a unique effectful functor from $(\MON(\hyV) \to \EFF(\hyV,\hyG))$ to $(\cbaseV \to \ccatC)$ giving its universal property (\Cref{lemma:freeness}).
\end{proof}

\begin{corollary}[String diagrams for effectful categories]
  We can use string diagrams for effectful categories, quotiented under the same isotopy as for monoidal categories, provided that we do represent the runtime as an extra wire that needs to be the input and output of every effectful morphism.
\end{corollary}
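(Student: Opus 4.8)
The plan is to read the corollary not as a fresh theorem but as a linguistic repackaging of \Cref{theorem:runtime-as-a-resource} together with the standard soundness-and-completeness of monoidal string diagrams, so the proof is a matter of assembling three facts in the right order. First I would recall the base fact about monoidal categories: the morphisms of the free strict monoidal category $\MON(\hyG)$ over a \polygraph{} $\hyG$ \emph{are} string diagrams on the generators of $\hyG$, taken up to planar isotopy (topological deformation); this is Joyal--Street \cite{joyal91} and is the very foundation on which the paper builds. Applying this to the runtime \polygraph{} $\Run(\hyV,\hyG)$ gives that morphisms of $\MONRUN(\hyV,\hyG)$ are exactly string diagrams over the runtime generators of \Cref{fig:rungen}, quotiented by isotopy and by the braiding axioms of \Cref{fig:runaxiom} for the extra $\R$-wire.

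Next I would transport this graphical description across the hom-set bijection supplied by the theorem. Since
\[\EFF(\hyV,\hyG)(A,B) \cong \MONRUN(\hyV,\hyG)(\R \tensor A, \R \tensor B),\]
a morphism of the free effectful category is, up to isomorphism, a morphism of the runtime monoidal category whose source and target each carry a single leading $\R$. Composition on the effectful side corresponds to composition in $\MONRUN$ after cancelling the shared $\R$ (equivalently, working with the leftmost component $f_{00}$ of a braid clique morphism, which we already observed determines the whole clique), and the effectful tensor corresponds to juxtaposition in $\MONRUN$ with the $\R$-wire threaded through in order. Under the graphical description of $\MONRUN$, this is precisely the statement that an effectful morphism is depicted by a string diagram in which every effectful generator has the $\R$-wire as an extra input and output, threaded sequentially; two such diagrams denote the same effectful morphism iff they are related by the isotopy and braiding-axiom quotient inherited from $\MONRUN$.

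The one point requiring care is the quotient: the corollary asserts isotopy ``the same as for monoidal categories'', so I would argue that the braiding axioms of \Cref{fig:runaxiom} are absorbed by the isotopy relation in the intended reading, i.e. that sliding the $\R$-wire past any generator (and the clique identification of where $\R$ sits in a list of wires) is exactly a topological deformation of the runtime diagram. Because $\R$ is braided symmetrically with every other object and its braidings form the commuting clique $\Braid(A_0,\dots,A_n)$, choosing a representative is choosing a position for the $\R$-wire, and moving it is an isotopy; hence no extra relations beyond monoidal isotopy are visible once we fix the convention that the runtime enters and exits each effectful box. The main obstacle is therefore purely presentational rather than mathematical: making precise that ``quotiented under the same isotopy as for monoidal categories'' faithfully encodes the braid-clique identification, so that the graphical calculus is both \emph{sound} (isotopic diagrams denote equal effectful morphisms, which follows from soundness for $\MONRUN$) and \emph{complete} (equal effectful morphisms admit isotopic diagrams, which follows from completeness for $\MONRUN$ together with the hom-set bijection). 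With soundness and completeness for $\MONRUN$ in hand from \cite{joyal91}, both properties transfer along the bijection of \Cref{theorem:runtime-as-a-resource}, and the corollary follows.
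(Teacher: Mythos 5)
Your proposal is correct and matches the paper's intended argument: the corollary is stated as an immediate consequence of \Cref{theorem:runtime-as-a-resource}, obtained exactly as you do by combining the Joyal--Street description of morphisms in the free monoidal category over $\Run(\hyV,\hyG)$ with the hom-set bijection $\EFF(\hyV,\hyG)(A,B) \cong \MONRUN(\hyV,\hyG)(\R \tensor A, \R \tensor B)$, with the braid-clique identification accounting for where the $\R$-wire sits. Your explicit attention to how the braiding axioms of \Cref{fig:runaxiom} are absorbed into the isotopy quotient is a careful spelling-out of a point the paper leaves implicit, but it is the same route, not a different one.
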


\section{Profunctors and Promonads}
\label{sec:promonads}

We have elaborated on string diagrams for \effectfulCategories{}.
Let us now show that \effectfulCategories{} are fundamental objects.
The profunctorial counterpart of a monad is a \promonad{}.
\Promonads{} have been widely used for functional programming semantics, although usually with an extra assumption of strength and under the name of ``arrows'' \cite{heunen06,hughes00,jacobs09}.
\Promonads{} over a category endow it with some new, ``effectful'', morphisms; while the base morphisms of the category are called the ``pure'' morphisms.
This terminology will coincide when regarding \effectfulCategories{} as promonads.

In this section, we introduce profunctors and promonads.
In the following sections, we show that \effectfulCategories{} are to \promonads{} what monoidal categories are to categories: they are the \pseudomonoids{} of a suitably constructed monoidal bicategory of promonads.
In order to obtain this result, we introduce the \pureTensor{} of \promonads{} in \Cref{sec:puretensor}.
The \pureTensor{} of \promonads{} combines the effects of two \promonads{} over different categories into a single one.
In some sense, it does so in the universal way that turns ``purity'' into ``centrality'' (\Cref{th:universalpure}).

\subsection{Profunctors: an algebra of processes}

\Profunctors{} $P \colon \catA^{op} \times \catB \to \Set$ \cite{benabou67,borceux94,benabou00} can be thought as indexing \emph{families of processes} $P(A,B)$ by the types of an input channel $A$ and an output channel $B$ \cite{monoidalstreams}.

The category $\catA$ has as morphisms the pure transformations $f \colon A' \to A$ that we can apply to the input of a process $p \in P(A,B)$ to obtain a new process, which we call $(f > p) \in P(A',B)$.
Analogously, the category $\catB$ has as morphisms the pure transformations $g \colon B \to B'$ that we can apply to the output of a process $p \in P(A,B)$ to obtain a new process, which we call $(p < g) \in P(A,B')$.
The \profunctor{} axioms encode the compositionality of these transformations.

\begin{definition}
  \defining{linkprofunctor}{}
  A \emph{profunctor} $(P,\lp,\rp)$ between two categories $\catA$ and $\catB$ is a family of sets $P(A,B)$ indexed by objects of $\catA$ and $\catB$, and endowed with jointly functorial left and right actions of the morphisms of $\catA$ and $\catB$, respectively.
  Explicitly, types of these actions are $\defining{linkleftprofunctoraction}{(\lp)} \colon \hom(A',A) \times P(A',B) \to P(A,B)$, and
  $\defining{linkrightprofunctoraction}{(\rp)} \colon \hom(B,B') \times P(A,B) \to P(A,B')$.
  They must satisfy
  \begin{itemize}
    \item compatibility, $(f \lp p) \rp g = f \lp (p \rp g)$,
    \item preserve identities, $\im \lp p = p$, and $p \rp \im = p$,
    \item and composition, $(p \rp f) \rp g = p \rp (f \comp g)$ and $f \lp (g \lp p) = (f \comp g) \lp p$.
  \end{itemize}
More succintly, a \emph{profunctor} $P \colon \catA \profarrow \catB$ is a functor $P \colon \catA^{op} \times \catB \to \Set$. When presented as a family of sets with a pair of actions, \profunctors{} are sometimes called \emph{bimodules}.
\end{definition}

A profunctor homomorphism $\alpha \colon P \to Q$ transforms processes of type $P(A,B)$ into processes of type $Q(A,B)$. The homomorphism affects only the effectful processes, and not the pure transformations we could apply in $\catA$ and $\catB$. This means that $\alpha(f \lp p) = f \lp \alpha(p)$ and that $\alpha(p \rp g) = \alpha(p) \rp g$.

\begin{definition}[Profunctor homomorphism]
  \defining{linkprofunctorhomomorphism}{}
  A \emph{profunctor homomorphism} from the \profunctor{} $P \colon \catA \profarrow \catB$ to the
  \profunctor{} $Q \colon \catA \profarrow \catB$ is a family of functions
  $\alpha_{A,B} \colon P(A,B) \to Q(A,B)$ preserving the left and right actions,
  $\alpha(f \lp p \rp g) = f \lp \alpha(p) \rp g$.
  Equivalently, it is a natural transformation $\alpha \colon P \to Q$ between the two functors $\catA^{op} \times \catB \to \Set$.
\end{definition}

How to compose two families of processes? Assume we have a process $p \in P(A,B_{1})$ and a process $q \in Q(B_{2},C)$.
Moreover, assume we have a transformation $f \colon B_{1} \to B_{2}$ translating from the output of the second to the input of the first.
In this situation, we can plug together the processes: $p \in P(A,B_{1})$ writes to an output of type $B_{1}$, which is translated by $f$ to an input of type $B_{2}$, then used by $q \in Q(B_{2},C)$.
There are two slightly different ways of describing this process, depending on whether we consider the translation to be part of the first or the second process.
We could translate just after finishing the first process, $(p \rp f, q)$; or translate just before starting the second process, $(p, f \lp q)$.

These are two different pairs of processes, with different types.
However, if we take the process interpretation seriously, it does not really matter when to apply the translation.
These two descriptions represent the same process.
They are \emph{dinaturally equivalent} \cite{monoidalstreams,loregian2021}.

\begin{definition}[Dinatural equivalence]
  Let $P \colon \catA \profarrow \catB$ and $Q \colon \catB \profarrow \catC$ be two \profunctors{}.
  Consider the set of matching pairs of processes, with a given input $A$ and output $C$,
  \[R_{P,Q}(A,C) = \sum_{B \in \catB} P(A,B) \times Q(B,C).\]
  \emph{Dinatural equivalence} $(\sim)$, on the set $R_{P,Q}(A,C)$ is the smallest equivalence relation satisfying
  $(p \rp g , q) \sim (p , g \lp q)$.
  The set of matching processes $R_{P,Q}(A,C)$ quotiented by dinaturality $(\sim)$ is written as $(P \diamond Q)(A,C)$.
  It is a particular form of colimit over the category $\catB$, called a \emph{coend}, usually denoted by an integral sign.
  \[(P \diamond Q)(A,C) = R_{P,Q}(A,C)/(\sim) = \int^{B \in \catB} P(A,B) \times Q(B,C).\]
\end{definition}

\begin{definition}[Profunctor composition]
  The composition of two profunctors $P \colon \catA \profarrow \catB$ and $Q \colon \catB \profarrow \catC$ is the profunctor $(P \profcomp Q) \colon \catA \profarrow \catC$ has as processes the matching pairs of processes in $P$ and $Q$ quotiented by dinaturality on $\catB$,
  \[(p,g \rp q) \sim (p \lp g, q).\]
  Its actions are the left and right actions of $p$ and $q$, respectively, $f \lp (p,q) \rp g = (f \lp p, q \rp g)$.

  The identity profunctor $\catA \colon \catA \profarrow \catA$ has as processes the morphisms of the category $\catA$, it is given by the hom-sets.
  Its actions are pre and post-composition, $f \lp h \rp g = f \comp h \comp g$.
\end{definition}

Profunctors are better understood as providing a double categorical structure to the category of categories.
A double category $\catD$ contains 0-cells (or ``objects''), two different types of 1-cells (the ``arrows'' and the ``proarrows''), and cells \cite{shulman2008framed}.
Arrows compose in an strictly associative and unital way, while proarrows come equipped with natural isomorphisms representing associativity and unitality.
We employ the graphical calculus of double categories \cite{myers16}, with arrows going left to right and proarrows going top to bottom.

\begin{definition}
  The double category of categories, $\mathbf{CAT}$, has as objects the small categories $\catA, \catB, \dots$, as arrows the functors between them, $F \colon \catA \to \catA'$, as proarrows the profunctors between them, $P \colon \catA \nrightarrow \catB$, and as cells, the natural transformations, $\alpha_{A,B} \colon P(A,B) \to Q(FA,GB)$.
  \begin{figure}[H]
    \centering
    \includegraphics[scale=0.6]{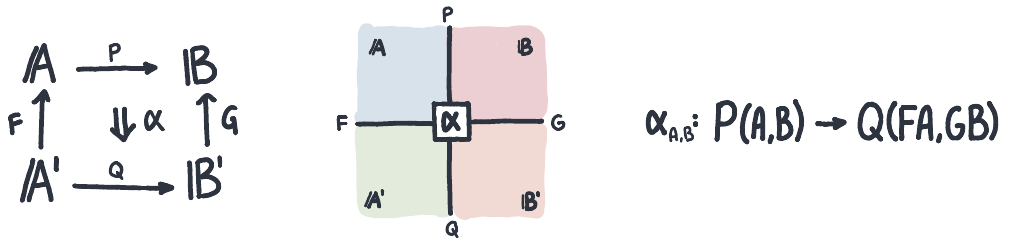}
    \caption{Cell in the double category of categories.}
    \label{fig:catcell}
  \end{figure}

  Every functor has a companion and a conjoint profunctors: their representable and corepresentable profunctors~\cite{grandis99}. This structure makes $\mathbf{CAT}$ into the paradigmatic example of a proarrow equipment (or \emph{framed bicategory}~\cite{shulman2008framed}).
\end{definition}

\subsection{Promonads: new morphisms for an old category}

Promonads are to profunctors what monads are to functors.\footnote{To quip, a promonad is just a monoid on the category of endoprofunctors.}
It may be then surprising to see that so little attention has been devoted to them, relative to their functorial counterparts.
The main source of examples and focus of attention has been the semantics of programming languages \cite{hughes00,paterson01,jacobs09}.
Strong monads are commonly used to give categorical semantics of effectful programs \cite{moggi91}, and the so-called \emph{arrows} (or \emph{strong promonads}) strictly generalize them.

Part of the reason behind the relative unimportance given to promonads elsewhere may stem from the fact that promonads over a category can be shown in an elementary way to be equivalent to identity-on-objects functors from that category~\cite{loregian2021}. The explicit proof is, however, difficult to find in the literature, and so we include it here (\Cref{th:promonadidonobjs}).

Under this interpretation, promonads are new morphisms for an old category. We can reinterpret the old morphisms into the new ones in a functorial way. The paradigmatic example is again that of Kleisli or cokleisli categories of strong monads and comonads.
This structure is richer than it may sound, and we will explore it further during the rest of this text.

\begin{definition}[Monoids and promonoids]
  A \emph{monoid} in a double category is an arrow $T \colon \catA \to \catA$ together with cells $m \in \hom(M \otimes M;1,1;M)$ and $e \in \cell(1;1,1;M)$, called multiplication and unit, satisfying unitality and associativity.
  A \emph{promonoid} in a double category is a proarrow $M \colon \catA \relto \catA$ together with cells $m \in \cell(1;M \otimes M,M,1)$ and $e \in \cell(1;1,M;1)$, called promultiplication and prounit, satisfying unitality and associativity.
  \begin{figure}[H]
    \centering
    \includegraphics[scale=0.6]{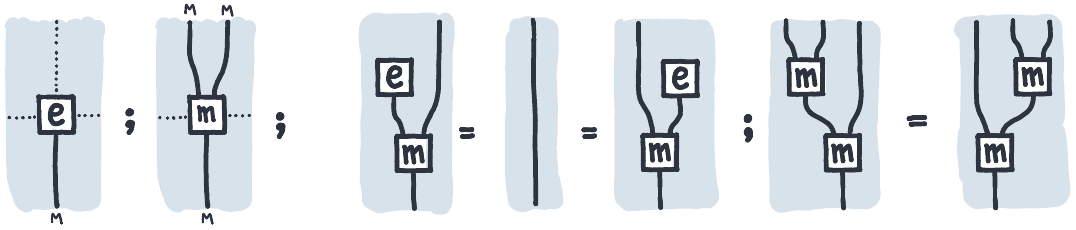}
    \caption{Data and axioms of a promonoid in a double category.}
  \end{figure}
  Dually, we can define \emph{comonoids} and \emph{procomonoids}.
\end{definition}

A monad is a monoid in the category of categories, functors and profunctors $\CAT$.
In the same way, a promonad is a promonoid in $\CAT$.

\begin{definition}\label{definition:promonad}
  \defining{linkpromonad}{}
  A \emph{promonad} $(P,\starp,\unitp{})$ over a category $\catC$ is a profunctor $P \colon \catC \profarrow \catC$ together with natural transformations representing inclusion $(\unitp{})_{X,Y} \colon \catC(X,Y) \to P(X,Y)$ and multiplication $(\defining{linkpromonadmultiplication}{\starp})_{X,Y} \colon P(X,Y) \times P(Y,Z) \to P(X,Z)$, and such that
  \begin{enumerate}[label=\roman*.]
    \item the right action is premultiplication, $\unitp{f} \starp p = f \lp p$;
    \item the left action is posmultiplication, $p \starp \unitp{f} = p \rp f$;
    \item multiplication is dinatural, $p \starp (f \lp q) = (p \rp f) \starp q$;
    \item and multiplication is associative, $(p_{1} \starp p_{2}) \starp p_{3} = p_{1} \starp (p_{2} \starp p_{3})$.
  \end{enumerate}
  Equivalently, promonads are promonoids in the double category of categories, where the
dinatural multiplication represents a transformation from the composition of the profunctor $P$ with itself.
\end{definition}

\begin{lemma}[Kleisli category of a promonad]
  \label{lemma:kleisli}
  Every promonad $(P,\starp,\unitp{})$ induces a category with the same objects as its base category, but with hom-sets given by $P(\bullet,\bullet)$, composition given by $(\starp)$ and identities given by $(\unitp{\im})$.
  This is called its \emph{Kleisli category}, $\kleisli{P}$. Moreover, there exists an identity-on-objects functor $\catC \to \kleisli{P}$, defined on morphisms by the unit of the promonad. 
  See Appendix.%
\end{lemma}

The converse is also true: every category $\catC$ with an identity-on-objects functor from some base category $\catV$ arises as the Kleisli category of a promonad.

\begin{theorem}
  \label{th:promonadidonobjs}
  Promonads over a category $\catC$ correspond to identity-on-objects functors from the category $\catC$.
  Given any identity-on-objects functor $i \colon \catC \to \catD$ there exists a unique promonad over $\catC$ having $\catD$ as its Kleisli category: the promonad given by the profunctor $\hom_{\catD}(i(\bullet),i(\bullet))$. 
  See Appendix.%
\end{theorem}

\subsection{Homomorphisms and transformations of promonads}

We have characterized promonads as identity-on-objects functors.
We now characterize the homomorphisms and transformations of promonads as suitable pairs of functors and natural transformations.

\begin{definition}[Promonoid homomorphism]
  Let $(\catA,M,m,e)$ and $(\catB,N,n,u)$ be promonoids in a double category.
  A promonoid homomorphism is an arrow $T \colon \catA \to \catB$ together with a cell $t \in \cell(F;M,N;F)$ that preserves the promonoid promultiplication and prounit.
  \begin{figure}[H]
    \centering
    \includegraphics[scale=0.6]{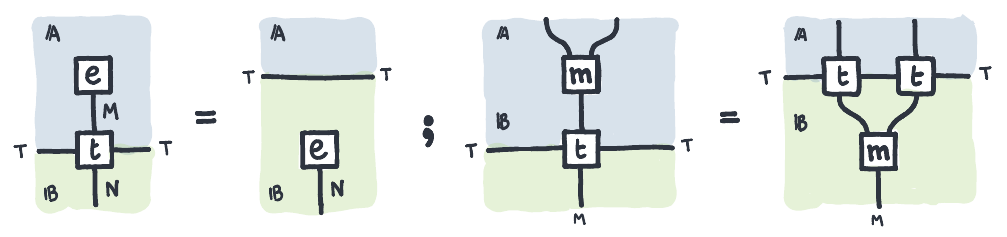}
    \caption{Axioms for a promonoid homomorphism.}
  \end{figure}
\end{definition}

\begin{definition}[Promonad homomorphism]
  \defining{linkpromonadhomomorphism}{}
  Let $(\catA,P,\starp{},\unitp{})$ and $(\catB,Q,\starp{},\unitp{})$ be two \promonads{}, possibly over two different categories.
  A \promonadHomomorphism{} $(F_{0},F)$ is a functor between the underlying categories $F_{0} \colon \catA \to \catB$ and a natural transformation $F_{X,Y} \colon P(X,Y) \to Q(FX,FY)$ preserving composition and inclusions.
  That is, $F(p_{1} \starp p_{2}) = F(p_{1}) \starp F(p_{2})$, and $F(\unitp{f}) = \unitp{F_{0}(f)}$.
\end{definition}

\begin{proposition}
  \label{prop:homomorphismcommuting}
  A \promonadHomomorphism{} between two \promonads{} understood as identity-on-objects functors, $\baseV \to \catC$ and $\baseW \to \catD$, is equivalently a pair of functors $(F_{0},F)$ that commute strictly with the two identity-on-objects functors on objects $F_{0}(X) = F(X)$ and morphisms $\unitp{F_{0}(f)} = F(\unitp{f})$.
  See Appendix.%
\end{proposition}

\begin{definition}[Promonoid modification]
  \defining{linkpromonoidmodification}{}
  Let $(\catA,M,m,e)$ and $(\catB,N,n,u)$ be promonoids in a double category,
  and let $t \in \cell(F;M,N;F)$ and $r \in \cell(G;M,N;G)$ be promonoid homomorphisms.
  A \emph{promonoid modification} is a cell $\alpha \in \cell(F;1,1;G)$ such that its precomposition with $t$ is its postcomposition with $r$.
  \begin{figure}[H]
    \centering
    \includegraphics[scale=0.6]{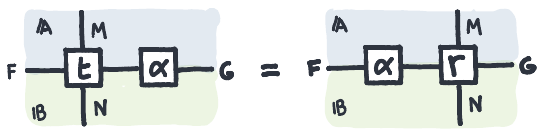}
    \caption{Axiom for a promonoid transformation.}
  \end{figure}
\end{definition}

\begin{definition}
  \defining{linkpromonadmodification}{}
  A \emph{\promonadModification{}} between two \promonadHomomorphisms{} $(F_{0},F)$ and $(G_{0},G)$ between the same \promonads{} $(\catA,P,\starp,\unitp{})$ and $(\catB,Q,\starp,\unitp{})$ is a natural transformation $\alpha_{X} \colon F_{0}(X) \to G_{0}(X)$ such that $\alpha_{X} \lp G(p) = F(p) \rp \alpha_{Y}$ for each $p \in P(X,Y)$.
\end{definition}
\begin{proposition}
  \label{prop:modificationcylinder}
  A \promonadModification{} between two \promonadHomomorphisms{} understood as commutative squares of identity-on-objects functors $\unitp{F_{0}(f)} = F(\unitp{f})$ and  $\unitp{G_{0}(f)} = G(\unitp{f})$ is a natural transformation $\alpha \colon F_{0} \Rightarrow G_{0}$ that can be lifted via the identity-on-objects functor to a natural transformation $\unitp{\alpha} \colon F \Rightarrow G$.
  In other words, a pure natural transformation.

  \begin{figure}[H]
    \centering
    \includegraphics[scale=0.6]{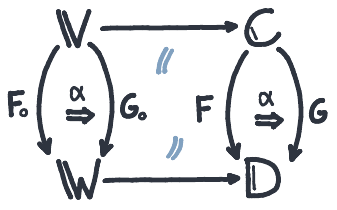}
    \caption{Promonad modifications are cylinder transformations.}
    \label{fig:cylinder}
  \end{figure}

\end{proposition}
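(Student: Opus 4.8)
The plan is to read everything through the Kleisli/identity-on-objects dictionary of \Cref{th:promonadidonobjs} and \Cref{prop:homomorphismcommuting}, which present the two homomorphisms $(F_{0},F)$ and $(G_{0},G)$ as commuting squares whose right-hand legs are genuine functors $F,G \colon \catC \to \catD$ between the Kleisli categories. First I would record the data: a \promonadModification{} supplies a natural transformation $\alpha \colon F_{0} \Rightarrow G_{0}$ on the \emph{base} categories, with components $\alpha_{X} \colon F_{0}X \to G_{0}X$ in $\catB$. Since $F$ and $G$ agree with $F_{0}$ and $G_{0}$ on objects, the inclusions $\unitp{\alpha_{X}}$ are morphisms $FX \to GX$ in $\catD$ --- exactly the right (co)domains to serve as the components of a candidate transformation $\unitp{\alpha} \colon F \Rightarrow G$ between the Kleisli functors.

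The crux is then to show that the modification equation is \emph{verbatim} the naturality square of $\unitp{\alpha}$. Naturality of $\unitp{\alpha}$ in $\catD$ must be checked on every Kleisli morphism $X \to Y$, that is, on every $p \in P(X,Y)$, and it reads $\unitp{\alpha_{X}} \starp G(p) = F(p) \starp \unitp{\alpha_{Y}}$. I would invoke the two axioms of \Cref{definition:promonad} relating the unit to the actions: axiom (i), $\unitp{f} \starp p = f \lp p$, rewrites the left-hand side as $\alpha_{X} \lp G(p)$, while axiom (ii), $p \starp \unitp{f} = p \rp f$, rewrites the right-hand side as $F(p) \rp \alpha_{Y}$. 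Hence the naturality square collapses to $\alpha_{X} \lp G(p) = F(p) \rp \alpha_{Y}$, which is precisely the defining condition of a \promonadModification{}. Running this identification in both directions --- and noting that the base-naturality of $\alpha$ is shared, verbatim, as data in both formulations --- yields the claimed bijection, and exhibits $\unitp{\alpha}$ as a transformation factoring through the inclusion, i.e. a \emph{pure} natural transformation.

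I expect the only real care to be bookkeeping rather than mathematics: matching the composition orders of $\starp$ and $\comp$ when passing through the inclusion, and checking that $F,G$ genuinely are functors on the Kleisli categories so that $\unitp{\alpha}$ is a legitimately typed transformation (this is exactly what \Cref{prop:homomorphismcommuting} buys us, together with the fact that $F,G$ preserve $\starp$ and $\unitp{}$). It is worth flagging that no faithfulness of the inclusion $\catB \to \catD$ is needed: base-naturality is assumed outright in the notion of a pure natural transformation, so the pure case $p = \unitp{f}$ never has to be recovered from the Kleisli square. Finally I would match this computation to the double-categorical picture of \Cref{fig:cylinder}, reading $\unitp{\alpha}$ as the pure filler that assembles the two homomorphism cells into a cylinder.
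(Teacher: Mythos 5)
Your proof is correct. Note that the paper itself offers no proof of this proposition (it is the one result in the correspondence table without an appendix counterpart), so there is nothing to compare against; your argument is exactly the definitional unfolding the author presumably had in mind. The key observation is precisely yours: Kleisli composition being $\starp$, the naturality square $\unitp{\alpha_{X}} \starp G(p) = F(p) \starp \unitp{\alpha_{Y}}$ collapses via axioms (i) and (ii) of \Cref{definition:promonad} to the modification equation $\alpha_{X} \lp G(p) = F(p) \rp \alpha_{Y}$, and your remark that base-naturality of $\alpha$ must be carried as data (rather than recovered from the case $p = \unitp{f}$, which would require injectivity of the unit) is a point worth making explicit.
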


Summarizing this section, we have shown a correspondence between promonads, their homomorphisms and modifications, and identity-on-objects functors, squares and cylinder transformations of squares.
The double category structure allows us to talk about homomorphisms and modifications, which would be more difficult to address in a bicategory structure.
\begin{center}
\begin{tabular}{ c|c|c }
 \Promonad{} & Identity-on-objects functor & \Cref{th:promonadidonobjs} \\ \hline
 \PromonadHomomorphism{} & Commuting square & \Cref{prop:homomorphismcommuting} \\ \hline
 \PromonadModification{} & Cylinder transformation & \Cref{prop:modificationcylinder}
\end{tabular}
\end{center}

\section{Pure Tensor of Promonads}
\label{sec:puretensor}

This section introduces the \emph{pure tensor of promonads}.
The \pureTensor{} of \promonads{} combines the effects of two \promonads{}, possibly over different categories, into the effects of a single \promonad{} over the product category.
Effects do not generally interchange.
However, this does not mean that no morphisms should interchange in the \pureTensor{} of \promonads{}:
in our interpretation of a \promonad{} $\baseV \to \catC$, the morphisms coming from the inclusion are \emph{pure}, they produce no effects; pure morphisms with no effects should always interchange with effectful morphisms, even if effectful morphisms do not interchange among themselves.

A practical way to encode and to remember all of the these restrictions is to use monoidal string diagrams.
This is another application of the idea of \colorPre{runtime}: we introduce an extra wire so that all the rules of interchange become ordinary interchange laws in a monoidal category.
That is, we insist again that effectful morphisms are just pure morphisms using a shared resource -- the \colorPre{runtime}.
When we compute the \pureTensor{} of two promonads, the runtime needs to be shared between the impure morphisms of both promonads.

\subsection{Pure tensor, via runtime}
\begin{definition}[Pure tensor]
  Let $\catC \colon \baseV \profarrow \baseV$ and $\catD \colon \baseW \profarrow \baseW$ be two \promonads{}.
  Their \emph{pure tensor}, $\catC \ast \catD \colon \baseV \times \baseW \to \baseV \times \baseW$, is a promonad over $\baseV \times \baseW$ where elements of $\catC \ast \catD (X,Y;X',Y')$, the morphisms $X \tensor \R \tensor Y \to X' \tensor \R \tensor Y'$ in the freely presented monoidal category generated by the elements of \Cref{fig:runtimeprop} and quotiented by the axioms of \Cref{fig:sharedaxioms}.
  \begin{figure}[H]
    \centering
    \includegraphics[scale=0.6]{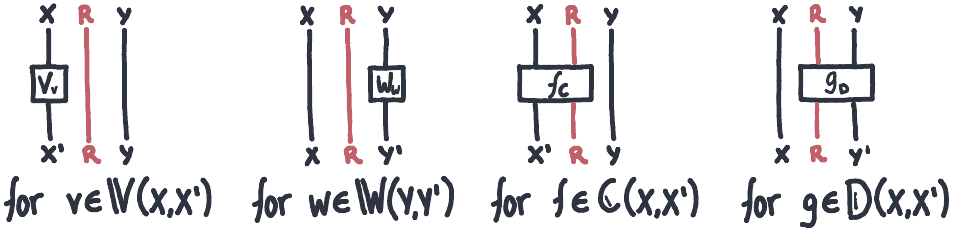}
    \caption{Generators for the elements of the pure tensor of promonads.}
    \label{fig:runtimeprop}
  \end{figure}

  \begin{figure}
    \centering
    \includegraphics[scale=0.6]{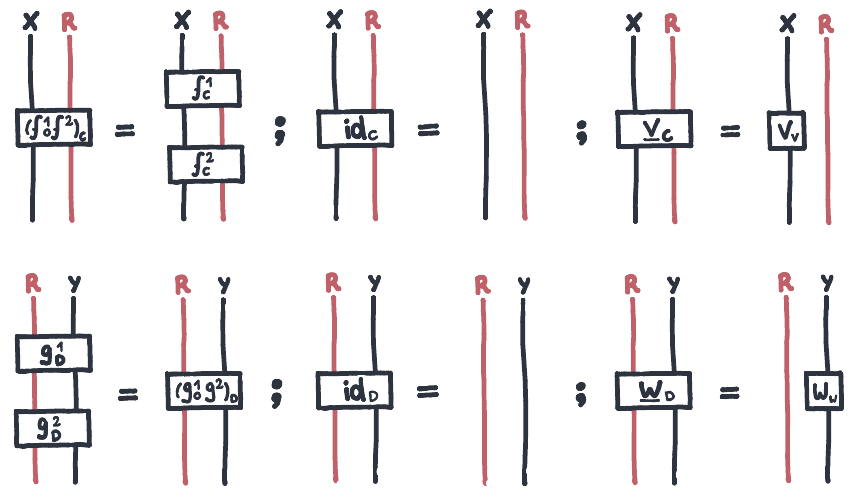}
    \caption{Axioms for the elements of the pure tensor of promonads.}
    \label{fig:sharedaxioms}
  \end{figure}
  Multiplication is defined by composition in the monoidal category, and the unit is defined by the inclusion of pairs, as depicted in \Cref{fig:puretensor}.
  \begin{figure}[H]
    \centering
    \includegraphics[scale=0.6]{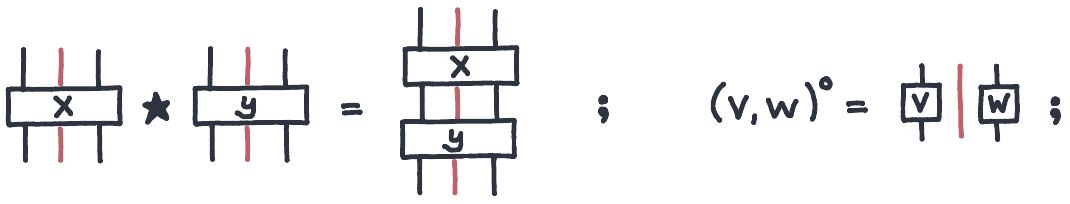}
    \caption{The pure tensor promonad.}
    \label{fig:puretensor}
  \end{figure}
  In other words, the elements of the pure tensor are the morphisms the category presented by the graph that has as objects the pairs of objects $(X,Y)$ with $X \in \obj{\baseV}$ and $Y \in \obj{\baseW}$, formally written as $X \tensor \R \tensor Y$; and the morphisms generated by
  \begin{itemize}
    \item an edge $f_{\catC} \colon X \tensor \R \tensor Y \to X' \tensor \R \tensor Y$ for each arrow $f \in \catC(X,X')$  and each object $Y \in \baseW$;
    \item an edge $g_{\catD} \colon X \tensor \R \tensor Y \to X \tensor \R \tensor Y'$ for each arrow $g \in \catD(Y,Y')$  and each object $X \in \baseV$;
    \item an edge $v_{\baseV} \colon X \tensor \R \tensor Y \to X' \tensor \R \tensor Y$ for each arrow $v \in \baseV(X,X')$  and each object $Y \in \baseW$;
    \item and an edge $w_{\baseW} \colon  X \tensor \R \tensor Y \to X \tensor \R \tensor Y'$ for each arrow $w \in \baseW(Y,Y')$  and each object $X \in \baseV$;
  \end{itemize}
  quotiented by centrality of pure morphisms: $f_{\catC} \comp w_{\baseW} = w_{\baseW} \comp f_{\catC}$ and $g_{\catD} \comp v_{\baseV} = v_{\baseV} \comp g_{\catD}$; by compositions and identities of one promonad: $f_{\catC} \comp f'_{\catC} = (f \starp f')_{\catC}$ and $\im_{\catC} = \im$; by compositions and identities of the other promonad: $g_{\catD} \comp g'_{\catD} = (g \starp g')_{\catD}$ and $\im_{\catD} = \im$; and by the coincidence of pure morphisms and their effectful representatives: $v_{\baseV} = \unitp{v}_{\catC}$ and $w_{\baseW} = \unitp{w}_{\catD}$.
\end{definition}

Crucially in this definition, $f_{\catC}$ and $g_{\catD}$ do not interchange: they are sharing the \colorPre{runtime}, and that prevents the application of the interchange law.
The \pureTensor{} of promonads, $\catC \ast \catD$, takes its name from the fact that, if we interpret the promonads $\baseV \to \catC$ and $\baseW \to \catD$ as declaring the morphisms in $\baseV$ and $\baseW$ as pure, then the pure morphisms of the composition interchange with all effectful morphisms.
The spirit is similar to the \emph{free product of groups with commuting subgroups} \cite{magnus2004combinatorial}.

\subsection{Universal property of the pure tensor}

There are multiple canonical ways in which one could combine the effects of two promonads,
$\catC \colon \baseV \profarrow \baseV$ and $\catD \colon \baseW \profarrow \baseW$, into a single promonad, such as taking the product of both, $\catC \times \catD \colon \baseV \times \baseW \profarrow \baseV \times \baseW$.
Let us show that the \pureTensor{} has a universal property: it is the universal one in which we can include impure morphisms from each promonads, interchanging with pure morphisms from the other promonad, so that purity is preserved.

\begin{theorem}
  \label{th:universalpure}
  Let $\catC \colon \baseV \profarrow \baseV$ and $\catD \colon \baseW \profarrow \baseW$ be two \promonads{} and let $\catC \ast \catD \colon \baseV \times \baseW \to \baseV \times \baseW$ be their pure tensor.
  There exist a pair of \promonadHomomorphisms{} $L \colon \catC \times \baseW \to \catC \ast \catD$ and $R \colon \baseV \times \catD \to \catC \ast \catD$. These are universal in the sense that, for every pair of promonad homomorphisms, $A \colon \catC \times \baseW \to \catE$ and $B \colon \baseV \times \catD \to \catE$, there exists a unique promonad homomorphism $(A \vee B) \colon \catC \ast \catD \to \catE$ that commutes strictly with them, $(A \vee B) \comp L = A$ and $(A \vee B) \comp R = B$. 
  See Appendix.%
\end{theorem}

\section{Effectful Categories are Pseudomonoids}
\label{sec:pseudomonoids}
We will now use the \pureTensor{} of \promonads{} to justify \effectfulCategories{} as the promonadic counterpart of monoidal categories: \effectfulCategories{} are \pseudomonoids{} in the monoidal bicategory of \promonads{} with the \pureTensor{}.
\Pseudomonoids{} \cite{street97,verdon17} are the categorification of monoids.
They are still formed by a 0-cell representing the carrier of the monoid and a pair of 1-cells representing multiplication and units. However, we weaken the requirement for associativity and unitality to the existence of invertible 2-cells, called the \emph{associator} and \emph{unitor}.

In the same way that monoids live in monoidal categories, pseudomonoids live in monoidal bicategories.
A monoidal bicategory $\bicatA$ is a bicategory in which we can tensor objects with a pseudofunctor $(\boxtimes) \colon \bicatA \times \bicatA \to \bicatA$ and we have a tensor unit $I \colon 1 \to \bicatA$, these are associative and unital up to equivalence, and satisfy certain coherence equations up to invertible modification \cite{schommerpries2011classification}.

\subsection{Pseudomonoids}
\begin{definition}
  In a monoidal bicategory, a \emph{pseudomonoid} over a 0-cell $M$ is a pair of 1-cells, $M \boxtimes M \to M$ and $I \to M$, together with the following triple of invertible 2-cells representing associativity and unitality (\Cref{fig:pseudomonoid}), and satisfying the pentagon and triangle equations 
  (see Appendix).%
  A homomorphism of pseudomonoids is given by a 1-cell between their underlying 0-cells and the following invertible 2-cells, representing preservation of the multiplication and the unit (\Cref{fig:pseudomonoidhom}), and satisfying compatibility with associativity and unitality 
  (see Appendix).%
  \begin{figure}[H]
    \centering
    \includegraphics[scale=0.6]{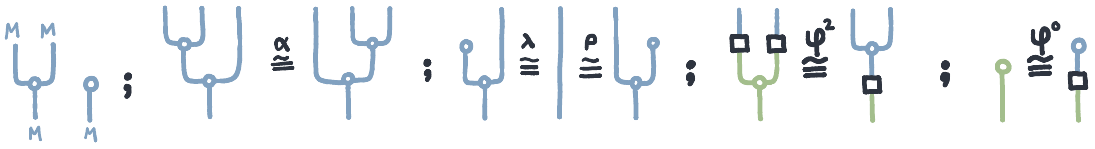}
    \caption{Data for a pseudomonoid and pseudomonoid homomorphism.}
    \label{fig:pseudomonoid}
    \label{fig:pseudomonoidhom}
  \end{figure}
\end{definition}

  A \pseudomonoid{} is \emph{strict} when the associators and unitors are identity cells.
  Note that, in strict 2-categories (sometimes called 2-categories, in contrast to bicategories), this is the same as a monoid in the monoidal category that we obtain by ignoring the 2-cells.
  \begin{remark}
    \label{remark:funnydoesnotwork}
  A \pseudomonoid{} in the monoidal bicategory of categories with the cartesian product of categories, $(\CAT, \times)$ is a monoidal category.
  A strict \pseudomonoid{} in the same monoidal bicategory is a strict monoidal category.
\end{remark}
A strict \pseudomonoid{} in the monoidal bicategory of categories with the funny tensor product of categories $(\CAT, \square)$ is a strict premonoidal category.
However, it is not immediately clear how to recover premonoidal categories as \pseudomonoids{}.
A naive attempt will fail: $(\CAT, \square)$ is usually made into a monoidal bicategory with non-necessarily-natural transformations, but we do want our coherence morphisms to be natural, so we must ask at least naturality.
This will not be enough: taking natural transformations as 2-cells will give us premonoidal categories where the associators and unitors do not need to be \emph{central}.
Centrality is what requires a more careful approach.

\subsection{Effectful categories are promonad pseudomonoids}

Promonads form a monoidal category with the pure tensor product and moreover a strict monoidal bicategory with promonad modifications.
Effectful categories are the pseudomonoids in this category.

\begin{theorem}
  \label{th:freydpseudomonoid}
  An \effectfulCategory{} (or monoidal Freyd category) is a pseudomonoid on the monoidal 2-category of promonads with promonad homomorphism, promonad transformations and the \pureTensor{} of promonads.
  A pseudomonoid homomorphism between \effectfulCategories{} is an \effectfulFunctor{}.

  As a consequence, preomonoidal categories \emph{with their centre} are pseudomonoids.
  See Appendix.%
\end{theorem}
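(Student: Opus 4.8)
The plan is to establish the theorem in two halves matching its two sentences: first, that the data of a pseudomonoid on a promonad (with the pure tensor) is exactly the data of an effectful category; second, that pseudomonoid homomorphisms correspond to effectful functors. For the first half I would unfold the definition of a pseudomonoid over a 0-cell in the monoidal 2-category of promonads. By \Cref{th:promonadidonobjs}, the 0-cell is an identity-on-objects functor $\baseV \to \catC$; the multiplication 1-cell is a promonad homomorphism $(\catC \ast \catC) \to \catC$, and the unit is a promonad homomorphism from the monoidal unit promonad to $\catC$. Using \Cref{prop:homomorphismcommuting}, each such 1-cell is a commuting square of identity-on-objects functors, so the multiplication is determined by a functor on the underlying categories together with its pure part. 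The key leverage is the universal property of the pure tensor (\Cref{th:universalpure}): a promonad homomorphism out of $\catC \ast \catC$ is exactly a pair of homomorphisms out of $\catC \times \baseV$ and $\baseV \times \catC$ that agree, which is precisely the data of the two whiskerings $(\bullet \tensor B)$ and $(A \tensor \bullet)$ of a binoidal category, together with the constraint that pure morphisms whisker centrally. This is where the pure tensor earns its name: its universal property forces exactly the centrality of the pure (image) morphisms that the definition of effectful category demands.

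Next I would read off the 2-cells. The associator and unitors of the pseudomonoid are invertible promonad modifications, which by \Cref{prop:modificationcylinder} are \emph{pure} natural transformations — natural transformations on the base that lift along the identity-on-objects functor. This is exactly the content of the clause in the definition of premonoidal category that the coherence isomorphisms $\alpha, \lambda, \rho$ must be central and natural at each component: centrality is naturality/purity in the promonad sense, and being ``natural separately at each component'' is what a modification gives once we have only whiskering rather than a full functor. I would then check that the pentagon and triangle axioms for the pseudomonoid (\Cref{fig:pseudomonoidaxioms}) transport to the pentagon and triangle equations for the premonoidal structure; since the correspondences of \Cref{th:promonadidonobjs,prop:homomorphismcommuting,prop:modificationcylinder} are all strict and structure-preserving, this should be a direct translation of diagrams rather than a fresh computation.

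For the second half I would take a pseudomonoid homomorphism, whose data is a 1-cell plus invertible 2-cells $\mu, \varepsilon$ witnessing preservation of multiplication and unit (\Cref{fig:pseudomonoidhom}). By the same three correspondences, the 1-cell is a commuting square of functors $(F_0, F)$, and $\mu, \varepsilon$ are pure natural isomorphisms; matching this against the definition of effectful functor $(F, F_0, \varepsilon, \mu)$, the coherence axioms (\Cref{fig:pseudomonoidhomaxioms}) become the condition that $F_0$ is a monoidal functor. The final consequence, that premonoidal categories with their centre are pseudomonoids, follows by instantiating the correspondence at the canonical effectful category $\zentre(\catC) \to \catC$ noted after the definition of effectful functor.

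The main obstacle I expect is bookkeeping the monoidal-bicategory coherence: verifying that the pure tensor $(\ast)$, together with promonad homomorphisms and modifications, really assembles into a (strict) monoidal $2$-category, so that ``pseudomonoid'' is even well-typed, and then checking that the associativity and unit \emph{constraints} of that tensor interact correctly with the pentagon/triangle of the pseudomonoid. Most of the dictionary is purely formal once the universal property (\Cref{th:universalpure}) is in hand, but pinning down the coherence data of $(\CAT\text{-promonads}, \ast)$ — in particular confirming the associator of $(\ast)$ is an identity or strictly coherent — is the delicate step, and it is why the theorem is phrased for a strict monoidal $2$-category rather than a general monoidal bicategory.
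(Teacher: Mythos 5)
Your proposal is correct and follows essentially the same route as the paper's (much terser) proof sketch: unfold the pseudomonoid data through the dictionary promonad~$=$~identity-on-objects functor, homomorphism~$=$~commuting square, modification~$=$~pure (cylinder) natural transformation, so that the multiplication 1-cell out of $\catC \ast \catC$ yields the monoidal structure on $\baseV$ and the premonoidal structure on $\catC$, and the associator/unitor 2-cells land in $\baseV$ and are therefore central. You actually supply more detail than the paper does — in particular the explicit appeal to \Cref{th:universalpure} to extract the two whiskerings, and the flagging of the unverified coherence of the monoidal 2-category $(\text{promonads}, \ast)$, which the paper's sketch silently assumes.
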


\section{Conclusions}

Premonoidal categories are monoidal categories with runtime, and we can stil use monoidal string diagrams and unrestricted topological deformations to reason about them. 
Instead of dealing directly with premonoidal categories, we employ the better behaved notion of non-cartesian Freyd categories, \effectfulCategories{}. There exists a more fine-grained notion of ``Cartesian effect category'' \cite{dumas11}, which generalizes Freyd categories
and justifies calling “effectful category” to the general case.

Promonads have been arguably under-appreciated, possibly because of their characterization as ``just'' identity-on-objects functors.
However, speaking of promonads as the proarrow counterpart of monads makes many aspects of the theory of monads clearer: every monad and every comonad induce a promonad (their Kleisli category) via the proarrow equipment, monad morphisms lift to promonad morphisms, distributive laws of monads induce a way of composing morphisms from different kleisli categories \cite{cheng21}.
Justifying \effectfulCategories{} in terms of promonads highlights their importance as the monadic counterpart of monoidal categories.

Ultimately, this is a first step towards our more ambitious project of presenting the categorical structure of programming languages in a purely diagrammatic way, revisiting Alan Jeffrey's work \cite{jeffrey97,jeffrey1997premonoidal,marionotes2022}.
The internal language of premonoidal categories and effectful categories is given by the \emph{arrow do-notation} \cite{paterson01}; at the same time, we have shown that it is given by suitable string diagrams.
This correspondence allows us to translate between programs and string diagrams (\Cref{fig:premonoidalprogram}).

\begin{figure}[H]
  \centering
  \includegraphics[scale=0.7]{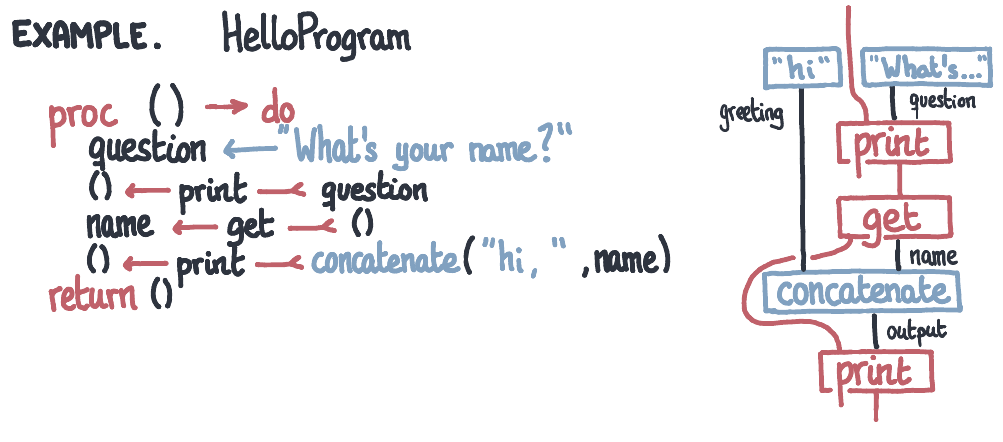}
  \caption{Premonoidal program in arrow do-notation and string diagrams.}
  \label{fig:premonoidalprogram}
\end{figure}

\paragraph{Related work.}
Staton and Møgelberg \cite{mogelberg14} propose a formalization of Jeffrey's graphical calculus for effectful categories that arise as the Kleisli category of a strong monad. 
They prove that \emph{'every strong monad is a linear-use state monad'}, that is, a state monad of the form $\R \multimap !(\bullet) \otimes \R$, where the state $\R$, is an object that cannot be copied nor discarded.

\section{Acknowledgements}
The author wants to thank Tarmo Uustalu, Sam Staton, Niels Voorneveld and Paweł Sobociński for many helpful comments and discussion. The author gratefully thanks the anonymous reviewers at ACT'22 for many constructive suggestions that improved this manuscript.
Mario Román was supported by the European Union through the ESF funded Estonian IT Academy research measure (2014-2020.4.05.19-0001); this work was also supported by the Estonian Research Council grant PRG1210.

\newpage

\bibliographystyle{eptcs}
\bibliography{bibliography}

\newpage

\appendix
\section{Effectful string diagrams}

During the following two lemmas, we will choose to always deal with the leftmost component of the braid clique morphism.
Given any clique $\Braid(A_{0},\dots,A_{n})$ we call $A = \tensordot{A}{n}$ to its tensoring;
clique morphisms $\Braid(A_{0},\dots,A_{n}) \to \Braid(B_{0},\dots,B_{m})$ are represented by morphisms $\R \tensor A \to \R \tensor B$.

\begin{lemma}\label{ax:lemma:eff-is-premonoidal}
  Let $(\hyV,\hyG)$ be a \polygraphCouple{}.
  There exists a \premonoidalCategory{}, $\EFF(\hyV,\hyG)$, that has as objects the braid cliques, $\Braid(A_{0},\dots,A_{n})$, in $\MONRUN(\hyV,\hyG)$, and as morphisms the braid clique morphisms between them.
\end{lemma}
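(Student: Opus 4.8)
The plan is to reduce every verification to a computation in the honest monoidal category $\MONRUN(\hyV,\hyG)$, exploiting the remark that a braid clique morphism is determined by, and can be freely transported between, any of its components. Concretely, I would represent a morphism $f \colon \Braid(A_{0},\dots,A_{n}) \to \Braid(A'_{0},\dots,A'_{n'})$ of $\EFF(\hyV,\hyG)$ by its leftmost component $\hat{f} \colon \R \tensor A \to \R \tensor A'$, writing $A = A_{0} \tensor \dots \tensor A_{n}$ and $A' = A'_{0} \tensor \dots \tensor A'_{n'}$. First I would check that $\EFF(\hyV,\hyG)$ is a category: the identity on $\Braid(A_{0},\dots,A_{n})$ is the clique morphism with leftmost component $\im_{\R \tensor A}$, and composition is inherited directly from $\MONRUN(\hyV,\hyG)$, since the leftmost component of a composite is the composite of leftmost components, $\widehat{f \comp g} = \hat{f} \comp \hat{g}$ (both keep the runtime on the left, so no braiding correction is needed). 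Associativity and unitality are then immediate from those of $\MONRUN(\hyV,\hyG)$.

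Next I would equip $\EFF(\hyV,\hyG)$ with its binoidal structure. On objects the tensor is concatenation of lists, $\Braid(A_{0},\dots,A_{n}) \tensor \Braid(B_{0},\dots,B_{m}) = \Braid(A_{0},\dots,A_{n},B_{0},\dots,B_{m})$, with unit $I = \Braid()$ the empty clique. For a morphism $f$ with leftmost component $\hat{f} \colon \R \tensor A \to \R \tensor A'$, the right whiskering $(\bullet \tensor \Braid(B))$ is defined by the leftmost component $\hat{f} \tensor \im_{B} \colon \R \tensor A \tensor B \to \R \tensor A' \tensor B$, whereas the left whiskering $(\Braid(B) \tensor \bullet)$ is defined by the component $\im_{B} \tensor \hat{f} \colon B \tensor \R \tensor A \to B \tensor \R \tensor A'$, which must then be conjugated by the runtime braiding $\sigma$ to recover the leftmost representative. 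I would verify that each assignment lands in braid clique morphisms, that both whiskerings are functorial (this follows because whiskering in the monoidal category $\MONRUN(\hyV,\hyG)$ preserves identities and composition), and that the two whiskerings coincide on the object $\Braid(A_{0},\dots,A_{n},B_{0},\dots,B_{m})$, as required.

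Since list concatenation is strictly associative and strictly unital, the tensor on objects is strict, so I would take the associator $\alpha$, the left unitor $\lambda$ and the right unitor $\rho$ to all be identity braid clique morphisms. With identity coherence data, naturality separately in each component, the pentagon, and the triangle equations hold trivially, and centrality is automatic because identities interchange with every morphism. This exhibits $\EFF(\hyV,\hyG)$ as a (strict) \premonoidalCategory{}.

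The hard part will be the left whiskering: I must transport the runtime wire past the fixed clique $\Braid(B)$ and confirm that the resulting assignment is a genuine functor valued in \emph{braid clique morphisms}, rather than in mere morphisms of $\MONRUN(\hyV,\hyG)$. The enabling fact is that a braid clique is a true clique — the runtime braidings form a coherent commuting family, guaranteed by the braiding axioms imposed in the definition of $\MONRUN(\hyV,\hyG)$ (\Cref{fig:runaxiom}) — so any single component extends uniquely to a braid-commuting family, and each verification collapses to the already-established monoidal structure of $\MONRUN(\hyV,\hyG)$. It is worth recording, finally, that interchange genuinely fails: two effectful morphisms whiskered on opposite sides both consume the shared runtime and so cannot be reordered, which is precisely why $(\bullet \tensor \bullet)$ is not a functor and the category is premonoidal rather than monoidal.
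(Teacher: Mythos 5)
Your proposal is correct and follows essentially the same route as the paper's own proof: composition and identities are inherited from $\MONRUN(\hyV,\hyG)$ via the leftmost component $\R \tensor A \to \R \tensor A'$, the tensor on objects is list concatenation, the right whiskering is plain whiskering while the left whiskering is conjugated by the runtime braiding, and the coherence isomorphisms are identities, hence automatically natural and central. The paper's appendix proof is terser but makes exactly these choices, including the observation (which you also record) that a genuine tensor of two morphisms cannot be defined because both would need the single runtime wire.
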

\begin{proof}
  Let us first give $\EFF(\hyV,\hyG)$ category structure.
  The identity on $\Braid(A_{0},\dots,A_{n})$ is the identity on $\R \tensor A$.
  The composition of a morphism $\R  \tensor A \to \R \tensor B$ with a morphism $\R \tensor B \to \R \tensor C$ is their plain composition in $\MONRUN(\hyV,\hyG)$.

  Let us now check that it is moreover a \premonoidalCategory{}.
  Tensoring of cliques is given by concatenation of lists, which coincides with the tensor in $\MONRUN(\hyV,\hyG)$. However, it is interesting to note that the tensor of morphisms cannot be defined in this way:
  a morphism $\R \tensor A \to \R \tensor B$ cannot be tensored with a morphism $\R \tensor A' \to \R \tensor B'$ to obtain a morphism $\R \tensor A \tensor A' \to \R \tensor B \tensor B'$.

  Whiskering of a morphism $f \colon \R  \tensor A \to \R \tensor B$ is defined with braidings in the left case, $\R \tensor C \tensor A \to \R \tensor C \tensor B$, and by plain whiskering in the right case, $\R \tensor A \tensor C \to \R \tensor B \tensor C$, as depicted in \Cref{fig:whiskering}.
   \begin{figure}[H]
    \centering
    \includegraphics[scale=0.6]{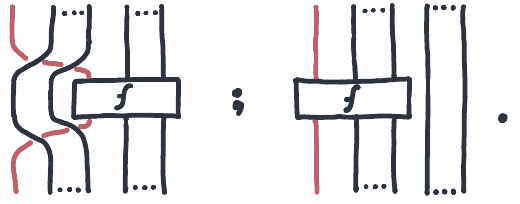}
    \caption{Whiskering in the runtime premonoidal category.}
    \label{fig:whiskering}
  \end{figure}

  Finally, the associators and unitors are identities, which are always natural and central.
\end{proof}

\begin{lemma}
  \label{ax:lemma:identity-mon-eff}
  Let $(\hyV,\hyG)$ be a \polygraphCouple{}.
  There exists an identity-on-objects functor \(\MON(\hyV) \to \EFF(\hyV,\hyG)\) that strictly preserves the premonoidal structure and whose image is central. This determines an effectful category.
\end{lemma}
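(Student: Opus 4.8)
The plan is to realize the functor as the free monoidal inclusion of the pure generators into the runtime category, followed by whiskering with the runtime. Since the pure generators of $\hyV$ appear inside $\Run(\hyV,\hyG)$ with no runtime attached, the universal property of the free strict monoidal category yields a strict monoidal functor $\MON(\hyV) \to \MONRUN(\hyV,\hyG)$, and I will write $v$ again for the image of a pure morphism $v \colon A \to B$. I then define $J \colon \MON(\hyV) \to \EFF(\hyV,\hyG)$ to be the identity on objects, sending a list $A_{0},\dots,A_{n}$ to the braid clique $\Braid(A_{0},\dots,A_{n})$ on the same list (legitimate because $\obj{\MON(\hyV)}$ and $\obj{\EFF(\hyV,\hyG)}$ are both lists over $\obj{\hyV} = \obj{\hyG}$), and on a morphism $v \colon A \to B$ to the braid clique morphism whose leftmost component is $\R \tensor v \colon \R \tensor A \to \R \tensor B$. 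This is well defined because a braid clique morphism is determined by an arbitrary morphism between leftmost representatives, the remaining components being generated by the runtime braidings.

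Functoriality and the identity-on-objects condition are then immediate: $J$ preserves identities since $\R \tensor \im_{A} = \im_{\R \tensor A}$, and it preserves composition because $\R \tensor (v \comp w) = (\R \tensor v) \comp (\R \tensor w)$ in $\MONRUN(\hyV,\hyG)$, while composition in $\EFF(\hyV,\hyG)$ is plain composition of leftmost components. For strict preservation of the premonoidal structure I will check the generating data: the monoidal unit of $\MON(\hyV)$ (the empty list) is sent to the empty clique, tensoring of objects is concatenation of lists on both sides, and associators and unitors are identities in both categories. The one substantive point is preservation of whiskering. Right whiskering matches on the nose, since $J(v \tensor \im_{C})$ has leftmost component $\R \tensor v \tensor \im_{C}$, which is exactly the right whiskering of $\R \tensor v$ in $\EFF(\hyV,\hyG)$ (\Cref{fig:whiskering}). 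For left whiskering I will use that $v$ is runtime-free: the left whiskering of $\R \tensor v$ by $C$ braids $\R$ past $C$, applies the morphism, and braids back, and because $v$ does not touch the runtime wire it commutes with these braidings, so the braid-there-and-back collapses by coherence of the braid clique and the result is $\R \tensor \im_{C} \tensor v = J(\im_{C} \tensor v)$.

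Centrality of the image is the heart of the argument, and it rests on the same observation. Given a pure $v \colon A \to B$ and an arbitrary morphism $g$ of $\EFF(\hyV,\hyG)$ with leftmost component $g_{0} \colon \R \tensor A' \to \R \tensor B'$, I will expand both interchange composites $(J(v) \tensor \im_{A'}) \comp (\im_{B} \tensor g)$ and $(\im_{A} \tensor g) \comp (J(v) \tensor \im_{B'})$ as morphisms $\R \tensor A \tensor A' \to \R \tensor B \tensor B'$ in $\MONRUN(\hyV,\hyG)$, unfolding the left whiskerings of $g$ into their runtime-braiding presentations. Since $\MONRUN(\hyV,\hyG)$ is a genuine monoidal category, the interchange law holds there; and since $v$ is runtime-free it commutes with $g_{0}$ and with every runtime braiding appearing in the two expressions. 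Sliding $v$ through identifies the two composites. Intuitively, effectful morphisms contend for the single shared runtime wire, whereas $\R \tensor v$ lets the runtime pass through untouched, so a pure morphism never blocks, and hence always interchanges with, any other morphism; the symmetric pair of interchange equations is proved identically.

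With $J$ established as an identity-on-objects functor from the monoidal category $\MON(\hyV)$ to the premonoidal category $\EFF(\hyV,\hyG)$ that strictly preserves the premonoidal structure and whose image is central, the data is exactly that required by the definition of an effectful category, so $\MON(\hyV) \to \EFF(\hyV,\hyG)$ is one. The main obstacle I anticipate is the bookkeeping in the left-whiskering and centrality steps: making precise that a runtime-free morphism commutes with the runtime braidings and that the resulting braid-there-and-back is trivial, which is where coherence of the braid clique (\Cref{ax:lemma:eff-is-premonoidal}) does the real work.
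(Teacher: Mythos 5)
Your proposal is correct and follows essentially the same route as the paper: the functor is defined by $v \mapsto \im_{\R} \tensor v$ on leftmost clique representatives, functoriality is tensoring with an identity, right whiskering is preserved on the nose, left whiskering and centrality both reduce to the interchange law of the genuine monoidal category $\MONRUN(\hyV,\hyG)$ together with the naturality of the runtime braiding over runtime-free morphisms. The paper delegates the left-whiskering and centrality checks to string-diagram figures, which encode exactly the braid-there-and-back cancellation you spell out in prose.
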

\begin{proof}
  A morphism $v \in \MON(\hyV)(A,B)$ induces a morphism $(\im_{\R} \tensor v) \in \MONRUN(\hyV,\hyG)(\R \tensor A,\R \tensor B)$, which can be read as a morphism of cliques $(\im_{\R} \tensor v) \in \EFF(\hyV,\hyG)(A,B)$.
  This is tensoring with an identity, which is indeed functorial.

  Let us now show that this functor strictly preserves the premonoidal structure.
  The fact that it preserves right whiskerings is immediate.
  The fact that it preserves left whiskerings follows from the axioms of symmetry (\Cref{fig:whiskeringeq}, left).
  Associators and unitors are identities, which are preserved by tensoring with an identity.
  \begin{figure}[H]
    \centering
    \includegraphics[scale=0.6]{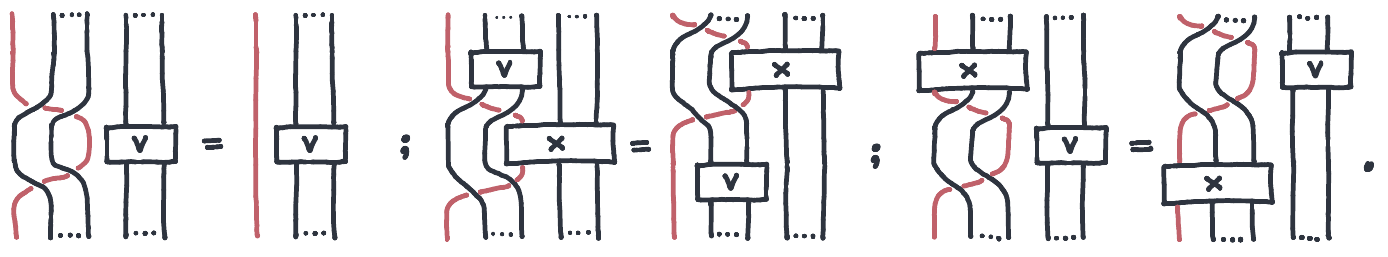}
    \caption{Preservation of whiskerings, and centrality.}
    \label{fig:centrality}    \label{fig:whiskeringeq}
  \end{figure}
  Finally, we can check by string diagrams that the image of this functor is central, interchanging with any given $x \colon \R \tensor C \to \R \tensor D$ (\Cref{fig:centrality}, center and right).
\end{proof}

\begin{lemma}[Freeness]
  \label{ax:lemma:freeness}
  Let $(\hyV,\hyG)$ be a \polygraphCouple{} and consider the effectful category determined by \(\MON(\hyV) \to \EFF(\hyV,\hyG)\).
  Let $\cbaseV \to \ccatC$ be a strict effectful category, with a \polygraphCouple{} morphism $F \colon (\hyV,\hyG) \to \mathcal{U}(\cbaseV,\ccatC)$.
  There exists a unique effectful functor from $(\MON(\hyV) \to \EFF(\hyV,\hyG))$ to $(\cbaseV \to \ccatC)$ commuting with $F$ as a \polygraphCouple{} morphism.
\end{lemma}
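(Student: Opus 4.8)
The plan is to build the required strict \effectfulFunctor{} as a pair $(\Phi_{0},\Phi)$ and then to derive its uniqueness from the fact that every morphism in $\EFF(\hyV,\hyG)$ is generated, under composition and whiskering, by the images of the generators. First I would define the pure component. The category $\MON(\hyV)$ is the free strict monoidal category on the \polygraph{} $\hyV$, so the object assignment of $F$ together with its action on the pure generators $\hyV \to \mathcal{U}(\cbaseV)$ extends, by the universal property of $\MON(\hyV)$, to a unique strict monoidal functor $\Phi_{0} \colon \MON(\hyV) \to \cbaseV$. Because the functor we are building must be strict and must commute with $F$ on objects and on pure generators, $\Phi_{0}$ is forced; this already pins down the value of the whole functor on objects, since both effectful categories are identity-on-objects over their pure parts.

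Next I would define the effectful component $\Phi \colon \EFF(\hyV,\hyG) \to \ccatC$. By \Cref{theorem:runtime-as-a-resource} and the leftmost-component convention, a morphism $A \to B$ of $\EFF(\hyV,\hyG)$ is a morphism $\R \tensor A \to \R \tensor B$ of the freely generated monoidal category $\MONRUN(\hyV,\hyG)$, that is, a string diagram in the generators of $\Run(\hyV,\hyG)$: effectful generators with the runtime wire threaded through them, pure generators lying off the runtime, and runtime braidings $\sigma$. I would read off $\Phi$ inductively on this diagram by erasing the runtime wire: an effectful generator $\gamma \in \hyG$ is sent to $F(\gamma) \in \ccatC$, a pure generator $v \in \hyV$ is sent to the central image of $\Phi_{0}(v)$ under the inclusion $\cbaseV \to \ccatC$, plain composition along the diagram is sent to composition in $\ccatC$, and the left and right whiskerings of \Cref{ax:lemma:eff-is-premonoidal} are sent to the corresponding premonoidal whiskerings of $\ccatC$. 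Intuitively, the runtime wire is only bookkeeping that records in which order the effectful morphisms are sequenced, and this order is preserved verbatim as composition order in $\ccatC$.

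The main obstacle is well-definedness: I must check that this assignment respects the relations of $\MONRUN(\hyV,\hyG)$, namely the runtime-braiding axioms (\Cref{fig:runaxiom}) and the interchange laws that do hold in the free monoidal category. The decisive point is that the equations allowing the runtime wire to slide past a pure generator translate, after erasing the runtime, exactly into the statement that the image of $\cbaseV$ is central in $\ccatC$; this centrality is guaranteed by the hypothesis that $\cbaseV \to \ccatC$ is an \effectfulCategory{}. Consequently the position of $\R$, i.e. the choice of clique representative, does not affect the resulting morphism of $\ccatC$, so $\Phi$ descends to braid clique morphisms and is a well-defined functor; it preserves identities, strictly preserves the premonoidal structure by construction of the whiskerings in \Cref{ax:lemma:eff-is-premonoidal}, and commutes on the nose with the two identity-on-objects functors, so $(\Phi_{0},\Phi)$ is a strict \effectfulFunctor{} extending $F$. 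Finally, uniqueness follows by generation: any strict \effectfulFunctor{} commuting with $F$ must agree with $\Phi_{0}$ on objects and pure generators and with $\Phi$ on effectful generators, and since it must strictly preserve composition and whiskering — the only operations used to build an arbitrary morphism of $\EFF(\hyV,\hyG)$ out of the generators — it necessarily coincides with $(\Phi_{0},\Phi)$.
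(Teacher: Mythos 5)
Your proposal is correct and follows essentially the same route as the paper: the pure part $\Phi_{0}$ is forced by the universal property of $\MON(\hyV)$, the effectful part is defined by structural induction on the monoidal terms of $\MONRUN(\hyV,\hyG)$ (with the tensor case reducing to whiskering because exactly one factor carries $\R$), well-definedness against interchange and the braiding axioms comes down to centrality of the image of $\cbaseV$ in $\ccatC$, and uniqueness follows because every step of the induction is forced. The paper merely spells out the case analysis (identities, compositions, tensors, generators, and each family of relations) in more detail than you do, but the ideas coincide.
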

\begin{proof}
  By freeness, there already exists a unique strict monoidal functor $H_{0} \colon \MON(\hyV) \to \cbaseV$ that sends any object $A \in \obj{\hyV}$ to $F_{obj}(A)$.
  We will show there is a unique way to extend this functor together with the hypergraph assignment $\hyG \to \catC$ into a functor $H \colon \EFF(\hyV,\hyG) \to \ccatC$.
  Giving such a functor amounts to give some mapping of morphisms containing the runtime $\R$ in some position in their input and output,
  \[f \colon A_{0} \tensor \dots \tensor \R \tensor \dots \tensor A_{n} \to B_{0} \tensor \dots \tensor \R \tensor \dots \tensor B_{m}\]
  to morphisms $H(f) \colon FA_{0} \tensor \dots  \tensor FA_{n} \to FB_{0} \tensor \dots  \tensor FB_{n}$ in $\ccatC$, in a way that preserves composition, whiskerings, inclusions from $\MON(\hyV)$, and that is invariant to composition with braidings.
  In order to define this mapping, we will perform structural induction over the monoidal terms of the runtime monoidal category of the form
  $\MONRUN(\hyV,\hyG)(A_{0} \tensor \dots \tensor \R^{(i)} \tensor \dots \tensor A_{n}, \R \tensor B_{0} \tensor \dots  \tensor \R^{(j)} \tensor  \dots \tensor B_{m})$ and show that it is the only mapping with these properties (\Cref{fig:assignment}).
  
  Monoidal terms in a strict, freely presented, monoidal category are formed by identities ($\im$), composition $(\comp)$, tensoring $(\otimes)$, and some generators (in this case, in \Cref{fig:rungen}). Monoidal terms are subject to \emph{(i)} functoriality of the tensor, $\im \tensor \im = \im$ and $(f \comp g) \tensor (h \comp k) = (f \tensor h) \comp (g \tensor k)$; \emph{(ii)} associativity and unitality of the tensor, $f \tensor \im_{I} = f$ and $f \tensor (g \tensor h) = (f \tensor g) \tensor h$; \emph{(iii)} the usual unitality, $f \comp \im = f$ and $\im \comp f = f$ and associativity $f \comp (g \comp h) = (f \comp g) \comp h$; \emph{(iv)} the axioms of our presentation (in this case, in \Cref{fig:runaxiom}).
  \begin{figure}[H]
    \centering
    \includegraphics[scale=0.60]{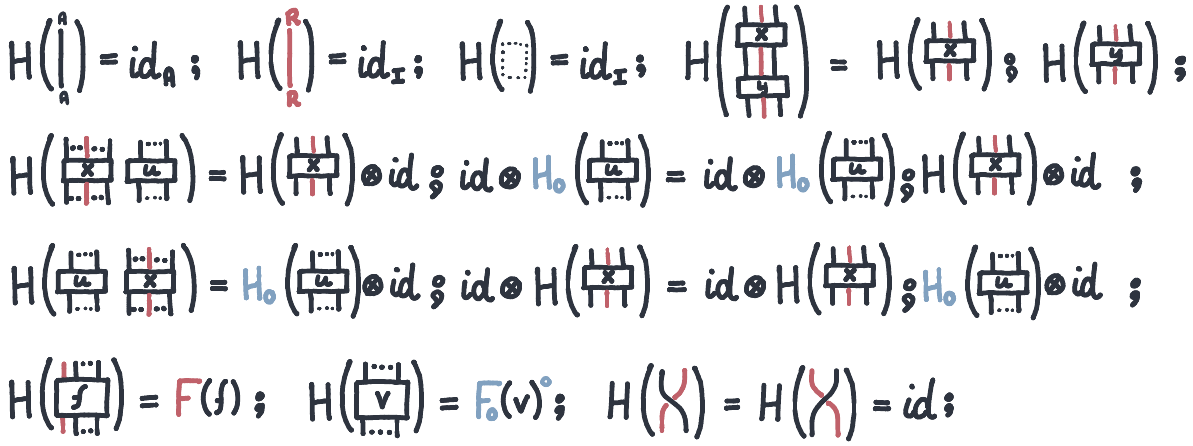}
    \caption{Assignment on morphisms, defined by structural induction on terms.}
    \label{fig:assignment}
  \end{figure}

  \begin{itemize}
    \item
      If the term is an identity, it can be \emph{(i)} an identity on an object $A \in \obj{(\hyV,\hyG)}$, in which case it must be mapped to the same identity by functoriality, $H(\im_{A}) = \im_{A}$; \emph{(ii)} an identity on the runtime, in which case it must be mapped to the identity on the unit object, $H(\im_{\R}) = \im_{I}$; or \emph{(iii)} an identity on the unit object, in which case it must be mapped to the identity on the unit, $H(\im_{I}) = \im_{I}$.

    \item
      If the term is a composition, $(f \comp g) \colon A_{0} \tensor \dots \tensor \R \tensor \dots \tensor A_{n} \to C_{0} \tensor \dots \tensor \R \tensor \dots \tensor C_{k}$, it must be along a boundary of the form $B_{0} \tensor \dots \tensor \R \tensor \dots \tensor B_{m}$: this is because every generator leaves the number of runtimes, $\R$, invariant.
      Thus, each one of the components determines itself a braid clique morphism.
      We must preserve composition of braid clique morphisms, so we must map $H(f \comp g) = H(f) \comp H(g)$.

    \item
      If the term is a tensor of two terms,  $(x \tensor u) \colon A_{0} \tensor \dots \tensor \R \tensor \dots \tensor A_{n} \to B_{0} \tensor \dots \tensor \R \tensor \dots \tensor B_{m}$, then only one of them was a term taking $\R$ as input and output (without loss of generality, assume it to be the first one) and the other was not: again, by construction, there are no morphisms taking one $\R$ as input and producing none, or viceversa.
      We split this morphism into $x \colon A_{0} \tensor\dots\tensor \R \tensor \dots \tensor A_{i-1} \to B_{0} \tensor \dots \tensor \R \tensor \dots \tensor B_{j-1}$ and $u \colon A_{i} \tensor \dots \tensor A_{n} \to B_{j} \tensor \dots \tensor B_{m}$.

      Again by structural induction, this time over terms $u \colon A_{i} \tensor \dots \tensor A_{n} \to B_{j} \tensor \dots \tensor B_{m}$,
      we know that the morphism must be either a generator in $\hyV(A_{i},\dots,A_{n};B_{j},\dots,B_{n})$ or a composition and tensoring of them. That is, $u$ is a morphism in the image of $\MON(\hyV)$, and it must be mapped according to the functor $H_{0} \colon \MON(\hyV) \to \cbaseV$.

      By induction hypothesis, we know how to map the morphism $x \colon A_{0} \tensor \dots\tensor  \R \tensor \dots \tensor A_{i-1} \to B_{0} \tensor \dots \tensor \R \tensor \dots \tensor B_{j-1}$.
      This means that, given any tensoring $x \tensor u$, we must map it to $H(x \tensor u) = (H(x) \tensor \im) \comp (\im \tensor H_{0}(u)) =  (\im \tensor H_{0}(u)) \comp (H(x) \tensor \im)$, where $H_{0}(u)$ is central.

    \item
      If the string diagram consists of a single generator, $f \colon \R \tensor A \to \R \tensor B$, it can only come from a generator $f \in \Run(\hyV,\hyG)(\R,A_{0},\dots,A_{n};\R, B_{0},\dots,B_{m}) = \hyG(A_{0},\dots,A_{n}; B_{0},\dots,B_{m})$,
      which must be mapped to $H(f) = F(f) \in \ccatC(A_{0} \tensor \dots \tensor A_{n}, B_{0} \tensor \dots \tensor B_{m})$.
      If the string diagram consists of a single braiding, it must be mapped to the identity, because the want the assignment to be invariant to braidings.

  \end{itemize}

  Now, we need to prove that this assignment is well-defined with respect to the axioms of these monoidal terms. Our reasoning follows \Cref{fig:assignmentwell}.

  \begin{figure}
    \centering
    \includegraphics[scale=0.60]{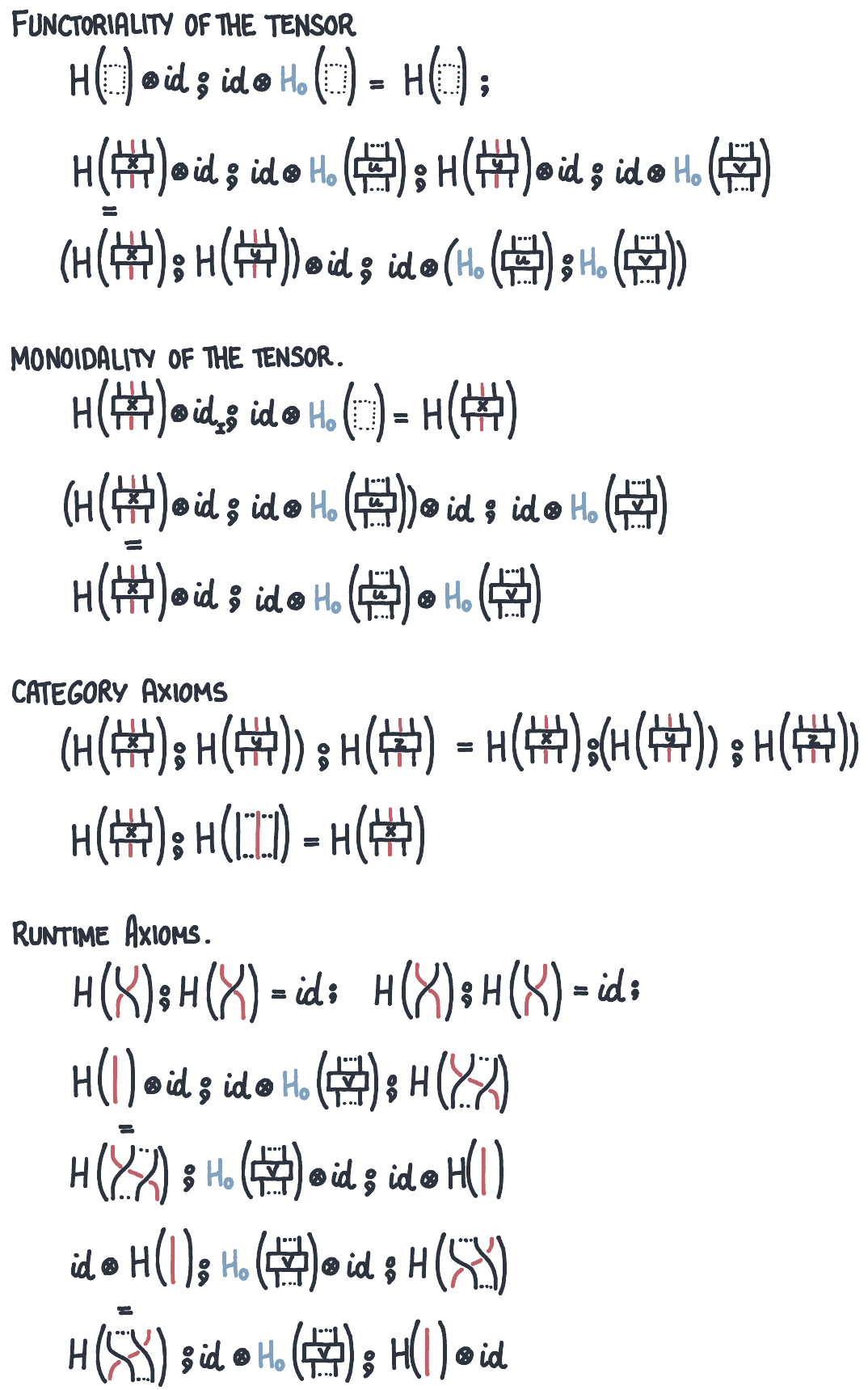}
    \caption{The assignment is well defined.}
    \label{fig:assignmentwell}
  \end{figure}

  \begin{itemize}
    \item
    The tensor is functorial. We know that $H(\im \tensor \im) = H(\im)$, both are identities and that can be formally proven by induction on the number of wires.
    Now, for the interchange law, consider a quartet of morphisms that can be composed or tensored first and such that, without loss of generality, we assume the runtime to be on the left side. Then, we can use centrality to argue that
    \begin{align*}
      H((x \tensor u) \comp (y \tensor v)) &= (H(x) \tensor \im) \comp (\im \tensor H_{0}(u)) \comp  (H(y) \tensor \im) \comp (\im \tensor H_{0}(v))
      \\&= ((H(x)\comp H(y)) \tensor \im) \comp (\im \tensor (H_{0}(u) \comp H_{0}(v)))
      \\&= H((x \comp y) \tensor (u \comp v)).
    \end{align*}

    \item
    The tensor is monoidal. We know that $H(x \tensor \im_{I}) = (H(x) \tensor \im_{I}) \comp (\im \tensor \im_{I}) = H(x)$.
    Now, for associativity, consider a triple of morphisms that can be tensored in two ways and such that, without loss of generality, we assume the runtime to be on the left side.
    Then, we can use centrality to argue that
    \begin{align*}
      H((x \tensor u) \tensor v) 
      &=  (((H(x) \tensor \im)  \comp (\im \tensor H_{0}(u))) \tensor \im) \comp \im \tensor H_{0}(v)
      \\&= (H(x) \tensor \im) \comp (\im \tensor H_{0}(u) \tensor H_{0}(v))
      \\&= H(x \tensor (u \tensor v))
    \end{align*}

    \item
    The terms form a category. And indeed, it is true by construction that $H(x \comp (y \comp z)) = H((x \comp y) \comp z)$ and also that $H(x \comp \im) = H(x)$ because $H$ preserves composition.

    \item
    The runtime category enforces some axioms. The composition of two braidings is mapped to the identity by the fact that $H$ preserves composition and sends both to the identity.  Both sides of the braid naturality over a morphism $v$ are mapped to $H_{0}(v)$; with the multiple braidings being mapped again to the identity.
  \end{itemize}
  Thus, $H$ is well-defined and it defines the only possible assignment and the only possible strict premonoidal functor.
\end{proof}

\section{Promonads}

\begin{lemma}[Kleisli category of a promonad]
  \label{ax:lemma:kleisli}
  Every promonad $(P,\starp,\unitp{})$ induces a category with the same objects as its base category, but with hom-sets given by $P(\bullet,\bullet)$, composition given by $(\starp)$ and identities given by $(\unitp{\im})$.
  This is called its \emph{Kleisli category}, $\kleisli{P}$. Moreover, there exists an identity-on-objects functor $\catC \to \kleisli{P}$, defined on morphisms by the unit of the promonad.
\end{lemma}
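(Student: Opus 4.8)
The plan is to treat the data of the Kleisli category as already fixed by the statement---objects those of $\catC$, hom-sets $\kleisli{P}(X,Y) = P(X,Y)$, composition $(\starp)$, and identities $\unitp{\im}$---so that the only content is to verify the category axioms, and then to check that the assignment $f \mapsto \unitp{f}$ is functorial and identity-on-objects. Each of these should follow mechanically from the four defining clauses of a promonad (\Cref{definition:promonad}) together with the elementary profunctor laws, so I do not anticipate a genuine obstacle; the entire task is to match each categorical law with the promonad axiom that discharges it.

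First I would show that $\kleisli{P}$ is a category. Associativity of Kleisli composition, $(p_{1} \starp p_{2}) \starp p_{3} = p_{1} \starp (p_{2} \starp p_{3})$, is exactly the associativity axiom (iv). For the unit laws, fix $p \in P(X,Y)$. The left unit law is obtained from axiom (i) (the right action is premultiplication): $\unitp{\im_{X}} \starp p = \im_{X} \lp p$, which equals $p$ by identity-preservation of the left profunctor action. Symmetrically, the right unit law follows from axiom (ii) (the left action is postmultiplication): $p \starp \unitp{\im_{Y}} = p \rp \im_{Y} = p$. This establishes that $\kleisli{P}$ is a category with the stated data.

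Next I would verify that the identity-on-objects assignment sending $f \colon X \to Y$ in $\catC$ to $\unitp{f} \in P(X,Y) = \kleisli{P}(X,Y)$ is a functor. Preservation of identities holds by construction, since $\unitp{\im_{X}}$ is precisely the chosen identity of $\kleisli{P}$ at $X$. For preservation of composition, given composable $f \colon X \to Y$ and $g \colon Y \to Z$ I must check that the Kleisli composite $\unitp{f} \starp \unitp{g}$ equals $\unitp{f \comp g}$, the image of the $\catC$-composite. Applying axiom (ii) with $p = \unitp{f}$ gives $\unitp{f} \starp \unitp{g} = \unitp{f} \rp g$, and naturality of the unit with respect to the right action---$\unitp{}$ being a profunctor homomorphism out of the hom-profunctor of $\catC$, whose right action is composition---identifies this with $\unitp{f \comp g}$. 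Functoriality follows.

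The one point that demands care, rather than a real difficulty, is bookkeeping the variance of the two profunctor actions and the direction of composition, so that axioms (i) and (ii) are each invoked on the correct side and the naturality of $\unitp{}$ is applied to the identity profunctor with its pre/post-composition actions. Once the conventions are fixed consistently, every equation above is a one-line consequence and the construction is complete.
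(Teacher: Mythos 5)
Your proposal is correct and follows essentially the same route as the paper: unit laws from axioms (i) and (ii) together with identity-preservation of the profunctor actions, associativity from axiom (iv), and functoriality of $f \mapsto \unitp{f}$. The only cosmetic difference is in the composition-preservation step, where the paper expands $\unitp{f} = \unitp{\im} \starp \unitp{f} = \unitp{\im} \rp f$ and uses the composition law of the right action, whereas you invoke naturality of the unit $\unitp{} \colon \hom_{\catC} \to P$ directly; both are valid consequences of the stated definition.
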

\begin{proof}
  Indeed, let us show that the composition defined by $(\starp)$ is unital and associative. Given any $p \in P(A,B)$, we have that identities are neutral with respect to composition on the right, $p \starp \unitp{\im}_{B} = p \rp \im_{B} = p$,
  and on the left $\unitp{\im}_{A} \starp u  = \im_{B} \lp u = u$.
  Composition is associative by the definition of promonad (\Cref{definition:promonad}, iii).

  Let us now check that the unit of the promonad $(\unitp{})$ defines an identity-on-objects functor, which is to say that the assignment on morphisms is functorial.
  By construction, $(\unitp{\im})$ is the identity on $\kleisli{P}$.
  Let us show now that the unit of the promonad also preserves composition,
  \[\begin{aligned}
    \unitp{f} \starp \unitp{g} =
    (\unitp{\im} \starp \unitp{f}) \starp \unitp{g} = \unitp{\im} < f < g =
    \unitp{\im} < (f \comp g)
    =
    \unitp{\im} \starp \unitp{(f \comp g)} =
    \unitp{(f \starp g)}.\qquad\qedhere
    \end{aligned}
  \]
\end{proof}

\begin{theorem}
  \label{ax:th:promonadidonobjs}
  Promonads over a category $\catC$ correspond to identity-on-objects functors from the category $\catC$.
  Given any identity-on-objects functor $i \colon \catC \to \catD$ there exists a unique promonad over $\catC$ having $\catD$ as its Kleisli category: the promonad given by the profunctor $\hom_{\catD}(i(\bullet),i(\bullet))$.
\end{theorem}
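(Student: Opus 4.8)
The plan is to establish the correspondence in both directions and then argue uniqueness. The statement asserts that promonads over $\catC$ are the same data as identity-on-objects functors out of $\catC$, so I would first recall that \Cref{ax:lemma:kleisli} already gives one direction: every promonad $(P,\starp,\unitp{})$ yields its Kleisli category $\kleisli{P}$ together with an identity-on-objects functor $\catC \to \kleisli{P}$. The remaining work is to produce a promonad from an arbitrary identity-on-objects functor $i \colon \catC \to \catD$, to check that this construction is inverse to the Kleisli construction, and to verify that the promonad obtained is the \emph{unique} one having $\catD$ as its Kleisli category.

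First I would construct the candidate promonad from $i \colon \catC \to \catD$. The underlying profunctor is $P(X,Y) = \hom_{\catD}(iX, iY)$, which is functorial in $\catC^{op} \times \catC$ precisely because $i$ is a functor: the left and right actions of $f \in \catC(X',X)$ and $g \in \catC(Y,Y')$ on $p \in \hom_{\catD}(iX,iY)$ are given by $f \lp p = p \comp i(f)$ and $p \rp g = i(g) \comp p$, using composition in $\catD$. The promonad unit is $\unitp{f} = i(f)$, and the promonad multiplication $(\starp)$ is simply composition in $\catD$. I would then verify the four axioms of \Cref{definition:promonad}: that the right action is premultiplication, the left action is postmultiplication, that multiplication is dinatural, and that multiplication is associative. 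Each of these reduces to associativity of composition in $\catD$ together with the functoriality of $i$, so these checks are routine. Because $i$ is identity-on-objects, the base category of this promonad is exactly $\catC$.

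Next I would confirm the two constructions are mutually inverse. Starting from $i \colon \catC \to \catD$, building the profunctor $\hom_\catD(i\bullet, i\bullet)$ and then forming its Kleisli category recovers $\catD$: objects agree since $i$ is identity-on-objects, hom-sets are $\hom_\catD(iX,iY)$ by definition, composition is composition in $\catD$, and identities are $i(\im) = \im$. Conversely, starting from a promonad $(P,\starp,\unitp{})$, passing to $\kleisli{P}$ and then forming the profunctor $\hom_{\kleisli{P}}(X,Y)$ recovers $P(X,Y)$ on the nose, with the actions and multiplication matching by the definition of the Kleisli category. These verifications establish the bijective correspondence at the level of objects.

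The main obstacle, and the part deserving the most care, is the \emph{uniqueness} clause: given $i \colon \catC \to \catD$, I must show there is only one promonad over $\catC$ whose Kleisli category is $\catD$ with the prescribed inclusion functor. The key point is that any such promonad $P'$ must have $P'(X,Y) = \hom_{\kleisli{P'}}(X,Y) = \hom_\catD(iX,iY)$ as a set, since its Kleisli hom-sets are by definition $P'(X,Y)$ and these must equal the hom-sets of $\catD$ under the identity-on-objects identification. Moreover the promonad structure is forced: the multiplication must be Kleisli composition, which is composition in $\catD$, and the unit must be the inclusion functor $i$, because the identity-on-objects functor $\catC \to \kleisli{P'}$ is required to agree with $i$. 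Thus all of the promonad data is determined by $\catD$ and $i$, giving uniqueness. I would present this as the observation that the Kleisli construction is injective on the nose once the inclusion functor is fixed, which completes the proof.
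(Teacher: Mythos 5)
Your proposal is correct and follows essentially the same route as the paper: construct the profunctor $\hom_{\catD}(i(\bullet),i(\bullet))$ with unit $\unitp{f} = i(f)$ and multiplication given by composition in $\catD$, verify the four promonad axioms by reducing them to associativity in $\catD$ and functoriality of $i$, and appeal to the Kleisli lemma for the converse direction. You are in fact somewhat more explicit than the paper on the uniqueness clause and on the two constructions being mutually inverse (the paper compresses both into a single concluding sentence), and your observation that the multiplication and unit force the rest of the structure is sound, since promonad axioms (i) and (ii) determine the profunctor actions from them.
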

\begin{proof}
  Note that the hom-sets of a category $\hom_{\catD}(i(\bullet),i(\bullet))$ form a profunctor with actions
  \[(p \rp f) = p \comp i(f), \mbox{ and } (g \lp p) = i(g) \comp p.\]
  We define the unit to be the assignment of the functor on morphisms. That is, $\unitp{f} = i(f)$. We define multiplication of the promonad to be composition in $\catD$. Let us now check the axioms of a promonad: premultiplication $\unitp{f} \starp p = i(f) \comp p = f > p$, postmultiplication $p \starp \unitp{g} = p \comp i(g) = p < g$, dinaturality $p \starp (f \lp q) = p \comp i(f) \comp q = (p \rp f) \starp q$, and associativity $(p_{1} \starp p_{2}) \starp p_{3} = (p_{1} \comp p_{2} \comp p_{3}) = p_{1} \starp (p_{2} \starp p_{3})$.
  We can conclude (with \Cref{lemma:kleisli}) that promonads coincide with identity-on-objects functors.
\end{proof}

\begin{proposition}
  \label{ax:prop:homomorphismcommuting}
  A \promonadHomomorphism{} between two \promonads{} understood as identity-on-objects functors, $\baseV \to \catC$ and $\baseW \to \catD$, is equivalently a pair of functors $(F_{0},F)$ that commute strictly with the two identity-on-objects functors on objects $F_{0}(X) = F(X)$ and morphisms $\unitp{F_{0}(f)} = F(\unitp{f})$.
\end{proposition}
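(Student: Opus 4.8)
The plan is to unwind the definition of \promonadHomomorphism{} through the correspondence of \Cref{th:promonadidonobjs}, which identifies the promonads $P$ and $Q$ with the hom-profunctors of their Kleisli categories $\catC = \kleisli{P}$ and $\catD = \kleisli{Q}$. Under this identification we have $P(X,Y) = \catC(X,Y)$ and $Q(X,Y) = \catD(X,Y)$, the multiplication $(\starp)$ is Kleisli composition (\Cref{lemma:kleisli}), and the inclusion $\unitp{f}$ is exactly the image of $f$ under the identity-on-objects functor $\baseV \to \catC$. Both descriptions in the statement carry literally the same underlying data, so the equivalence reduces to checking that the promonad-homomorphism equations on one side match the functoriality and strict-commutativity conditions on the other.

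First I would pass from a \promonadHomomorphism{} $(F_{0},F)$ to a commuting pair. The functor $F_{0} \colon \baseV \to \baseW$ is already given; I define a candidate functor $\catC \to \catD$ on objects by $F_{0}$ (legitimate because $\catC$ and $\catD$ share the objects of $\baseV$ and $\baseW$) and on hom-sets by the components $F_{X,Y} \colon \catC(X,Y) \to \catD(F_{0}X, F_{0}Y)$. I then check that this is a functor: preservation of composition is precisely the equation $F(p_{1} \starp p_{2}) = F(p_{1}) \starp F(p_{2})$, and preservation of identities follows from $F(\unitp{\im}) = \unitp{F_{0}(\im)} = \unitp{\im}$ together with $F_{0}$ preserving identities. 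The commuting-square conditions $F_{0}(X) = F(X)$ on objects and $\unitp{F_{0}(f)} = F(\unitp{f})$ on morphisms are then immediate, the first by the definition of $F$ on objects and the second by the inclusion-preservation equation.

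Conversely, from a commuting pair $(F_{0},F)$ I read off a promonad homomorphism by keeping the same $F_{0}$ and letting the new $F_{X,Y}$ be the action of the functor $F$ on hom-sets, $\catC(X,Y) = P(X,Y) \to Q(F_{0}X, F_{0}Y) = \catD(F_{0}X, F_{0}Y)$, which is well-typed because $F(X) = F_{0}(X)$. Here preservation of composition and of inclusions are, respectively, functoriality of $F$ and the commuting-on-morphisms condition. The one point genuinely requiring an argument, and the step I expect to be the main obstacle, is that $F_{X,Y}$ is natural, i.e.\ that it is a \profunctor{} homomorphism respecting the left and right actions. I would derive this from functoriality together with commutativity: since $p \rp g = p \starp \unitp{g}$ and $f \lp p = \unitp{f} \starp p$, functoriality gives $F(p \rp g) = F(p) \starp F(\unitp{g}) = F(p) \starp \unitp{F_{0}(g)} = F(p) \rp F_{0}(g)$, and symmetrically for the left action, so $F$ automatically preserves both actions.

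Finally, the two passages are mutually inverse by construction, since each leaves the pair $(F_{0},F)$ untouched and merely reinterprets $F$ either as a family of maps on the hom-sets $P(X,Y) \to Q(F_{0}X,F_{0}Y)$ or as an identity-on-objects-compatible functor $\catC \to \catD$. Hence \promonadHomomorphisms{} and commuting squares of identity-on-objects functors record the same data, as claimed; the computation above also shows that the naturality clause in the definition of \promonadHomomorphism{} is in fact redundant once the functor structure is present.
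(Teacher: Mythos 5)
Your proposal is correct and follows essentially the same route as the paper's own proof: both directions hinge on reading preservation of $(\starp)$ and $(\unitp{})$ as functoriality plus strict commutation of the square, and your computation $F(p \rp g) = F(p) \starp \unitp{F_{0}(g)} = F(p) \rp F_{0}(g)$ is exactly the paper's derivation that naturality (compatibility with the profunctor actions) comes for free. Your closing remark that the naturality clause is redundant given the functor structure is a correct and slightly more explicit rendering of what the paper leaves implicit.
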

\begin{proof}
  Given a \promonadHomomorphism{} $(F_{0},F)$, we will construct the pair of functors.
  One of them is already $F_{0}$.
  For the second one, we observe that $F(p_{1} \starp p_{2}) = F(p_{1}) \starp F(p_{2})$, and $F(\im) = \unitp{F_{0}(\im)} = \im$, making $F$ itself into the morphism assignment of a functor.
  Moreover, this functor, with the same assignment on objects as $F_{0}$, makes the square commute.

  Given any strictly commutative square of functors with $\unitp{F_{0}(f)} = F(\unitp{f})$, we can see that, by functoriality, $F$ induces a natural transformation determining a promonad homomorphism.
  \[F(f \lp p \rp g) = F(\unitp{f} \starp p \starp \unitp{g}) = \unitp{F_{0}(f)} \starp F(p) \starp \unitp{F_{0}(g)} = F_{0}(f) \lp F(p) \rp F_{0}(g).\]
  Again by functoriality, and by the commutativity of the square, we can see that it must also satisfy the promonad homomorphism axioms.
\end{proof}

\begin{theorem}
  \label{ax:th:universalpure}
  Let $\catC \colon \baseV \profarrow \baseV$ and $\catD \colon \baseW \profarrow \baseW$ be two \promonads{} and let $\catC \ast \catD \colon \baseV \times \baseW \to \baseV \times \baseW$ be their pure tensor.
  There exist a pair of \promonadHomomorphisms{} $L \colon \catC \times \baseW \to \catC \ast \catD$ and $R \colon \baseV \times \catD \to \catC \ast \catD$. These are universal in the sense that, for every pair of promonad homomorphisms, $A \colon \catC \times \baseW \to \catE$ and $B \colon \baseV \times \catD \to \catE$, there exists a unique promonad homomorphism $(A \vee B) \colon \catC \ast \catD \to \catE$ that commutes strictly with them, $(A \vee B) \comp L = A$ and $(A \vee B) \comp R = B$.
\end{theorem}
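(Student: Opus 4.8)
The plan is to read off the universal property from the presentation of $\catC \ast \catD$. By its very definition, the Kleisli category of $\catC \ast \catD$ is the monoidal category freely generated by the four families $f_{\catC}$, $g_{\catD}$, $v_{\baseV}$, $w_{\baseW}$, quotiented by centrality of the pure morphisms, the composition and identity laws of each promonad, and the coincidence relations $v_{\baseV} = \unitp{v}_{\catC}$ and $w_{\baseW} = \unitp{w}_{\catD}$. By \Cref{prop:homomorphismcommuting}, a promonad homomorphism out of $\catC \ast \catD$ is a functor out of this Kleisli category commuting with the inclusion, and such a functor is pinned down by an assignment on generators that respects all of the relations. The entire argument is thus a universal-property-from-presentation computation, and the only genuine content is checking that the relations are respected.

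First I would define the two structure homomorphisms. The map $L \colon \catC \times \baseW \to \catC \ast \catD$ is the identity on objects and sends $(f, w) \in \catC(X,X') \times \baseW(Y,Y')$ to $f_{\catC} \comp w_{\baseW}$ (equal to $w_{\baseW} \comp f_{\catC}$ by centrality); symmetrically $R \colon \baseV \times \catD \to \catC \ast \catD$ sends $(v, g)$ to $v_{\baseV} \comp g_{\catD}$. Preservation of composition is immediate from the one-promonad laws $f_{\catC} \comp f'_{\catC} = (f \starp f')_{\catC}$, $g_{\catD} \comp g'_{\catD} = (g \starp g')_{\catD}$ together with centrality, and preservation of inclusions is exactly the coincidence relations; so $L$ and $R$ are promonad homomorphisms.

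Given $A$ and $B$, I would define $(A \vee B)$ on generators by $(A \vee B)(f_{\catC}) = A(f, \im)$ and $(A \vee B)(g_{\catD}) = B(\im, g)$, which (through the coincidence relations) forces $(A \vee B)(v_{\baseV}) = A(\unitp{v}, \im)$ and $(A \vee B)(w_{\baseW}) = B(\im, \unitp{w})$, and then extend by functoriality. The one-promonad and coincidence relations are respected because $A$ and $B$ preserve $\starp$ and inclusions. The centrality relation is the delicate case: $f_{\catC} \comp w_{\baseW}$ maps to $A(f,\im) \starp B(\im, \unitp{w})$ while $w_{\baseW} \comp f_{\catC}$ maps to $B(\im, \unitp{w}) \starp A(f,\im)$, and rewriting the shared pure factor via the compatibility $B(\im, \unitp{w}) = \unitp{B_{0}(\im, w)} = \unitp{A_{0}(\im, w)} = A(\im, w)$ sends both to $A\big((f,\im) \starp (\im, w)\big) = A(f, w)$, since composition in the product promonad $\catC \times \baseW$ is componentwise. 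That $(A \vee B)$ itself preserves $\starp$ and inclusions is functoriality of the induced assignment together with this computation, and uniqueness is forced: any $\Phi$ with $\Phi \comp L = A$ and $\Phi \comp R = B$ satisfies $\Phi(f_{\catC}) = A(f, \im)$ and $\Phi(g_{\catD}) = B(\im, g)$, hence agrees with $(A \vee B)$ on generators and everywhere.

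The step I expect to be the main obstacle is the consistency on the shared pure morphisms, which is where $A$ and $B$ actually interact. The element $v_{\baseV}$ is the image under $L$ of the $\catC$-pure morphism $(\unitp{v}, \im)$ but also the image under $R$ of the base morphism $(v, \im)$, so well-definedness of $(A \vee B)$ demands $A(\unitp{v}, \im) = B(v, \im)$, and dually for $w$. This is precisely the requirement that $A$ and $B$ agree on the common pure part $\baseV \times \baseW$, which is automatic once $(A \vee B)$ is required to commute with the identity-on-objects homomorphisms $L$ and $R$, whose base functors are the identity on $\baseV \times \baseW$: they pin down a single functor on base categories shared by $A_{0}$ and $B_{0}$, and both $A$ and $B$ preserve inclusions. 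Verifying that this forced compatibility is exactly what makes the centrality computation collapse to $A(f, w)$ on both sides is the crux; everything else is the routine bookkeeping of a presentation argument.
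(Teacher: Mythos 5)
Your proposal follows essentially the same route as the paper's proof: the same explicit definitions $L(f,w) = f_{\catC} \comp w_{\baseW}$ and $R(v,g) = v_{\baseV} \comp g_{\catD}$, the same generator-by-generator definition of $(A \vee B)$ forced by commutation with $L$ and $R$, and the same identification of the interchange relation $f_{\catC} \comp w_{\baseW} = w_{\baseW} \comp f_{\catC}$ as the one nontrivial well-definedness check, resolved by the same computation collapsing both sides to $A(f,w)$ via the profunctor actions. The only difference is that you explicitly flag the compatibility $A_{0} = B_{0}$ on the shared base $\baseV \times \baseW$ as a needed hypothesis for existence, which the paper leaves implicit; this is a fair and slightly more careful reading of the statement, not a different argument.
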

\begin{proof}
  We start by constructing $L \colon \catC \times \baseW \to \catC \ast \catD$ and $R \colon \baseV \times \catD \to \catC \ast \catD$.
  These are defined by $L(f,w) = f_{\catC} \comp w_{\baseW} = w_{\baseW} \comp f_{\catC}$ and $R(g,v) = g_{\catD} \comp v_{\baseV} = v_{\baseV} \comp g_{\catD}$.
  Let us check that $L$ is promonad homomorphism, $R$ follows a similar reasoning.
  \begin{itemize}
    \item $L((v,w') \lp (f,w)) = L(v \lp f, w' \comp w) = v \comp f \comp w' \comp w = v \comp  w' \comp
    f \comp w = (v,w') \lp L(f,w)$,
    \item $L((f,w) \rp (v,w')) = L(f \rp v, w \comp w') = f \comp v \comp w \comp w' =
      f \comp w \comp v \comp w' = L(f,w) \rp (v,w')$,
    \item $L((f,w) \starp (f',w'))
      = L(f \starp f', w \comp w')
      = f \comp f' \comp w \comp w'
      = f \comp w \comp f' \comp w'
      = L(f,w) \starp L(f',w')$,
   \item $L(\unitp{v},w) = v\comp w = \unitp{L_{0}(v,w)}$.
  \end{itemize}

  We will now construct the promonad homomorphism as $(A \vee B)(f_{\catC}) = A(\im,f)$ and $(A \vee B)(g_{\catD}) = B(g,\im)$.
  This definition is forced by commutation with $l$ and $r$ and automatically defines the promonad homomorphism on all the generators of $\catC \ast \catD$.
  We can see it is well-defined, with the most interesting case being proving that it preserves the interchange of morphisms: indeed,
  $(A \vee B)(f_{\catC} \comp w_{\baseW}) = A(f,\im) \comp B(\im,w) = A(f,\im) \comp ((\im,w) \lp B(\im,\im)) = A(f,\im) \rp (\im,w) = A(f,w) = (A \vee B)(w_{\baseW} \comp f_{\catC})$, and the case with $(v,g)$ is analogous.
\end{proof}

\section{Cliques}

\begin{definition}[Clique]
  In a category $\catC$, a \emph{clique} $(X,\theta)$, is a family of objects, $X_{i}$, indexed by a set $i \in I$, and a family of isomorphisms $\theta_{i,j} \colon X_{i} \to X_{j}$ such that $\theta_{ij} \comp \theta_{jk} = \theta_{ik}$ and $\theta_{ii} = \im$.
\end{definition}

\begin{definition}[Clique morphism]
  A morphism between two cliques in the same category, $f \colon (X,\theta)\to (Y,\psi)$, is a family of morphisms $f_{ij} \colon X_{i} \to Y_{j}$ making every possible square commute, which means that $\theta_{ij} \comp f_{jl} = f_{ik} \comp \psi_{kl}$.
\end{definition}

\begin{proposition}
  A clique morphism  $f \colon (X,\theta)\to (Y,\psi)$ is completely determined by its value between any two indices, $f_{ij} \colon X_{i} \to Y_{j}$.
\end{proposition}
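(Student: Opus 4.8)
The plan is to extract from the defining commuting‑square condition an explicit formula reconstructing an arbitrary component $f_{kl}\colon X_k \to Y_l$ from a single fixed component $f_{ij}$, namely
\[ f_{kl} = \theta_{ki} \comp f_{ij} \comp \psi_{jl}. \]
Since the right‑hand side mentions only the fixed clique data $\theta, \psi$ and the one chosen morphism $f_{ij}$, establishing this identity immediately proves that $f$ is completely determined by $f_{ij}$. So the whole argument reduces to a two‑step diagram chase specializing the axiom $\theta_{ij} \comp f_{jl} = f_{ik} \comp \psi_{kl}$ at convenient indices.

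First I would change the second index, transporting along $\psi$. Instantiating the commuting‑square equation with a repeated first index so that $\theta_{ii} = \im$ collapses, the axiom becomes $f_{il} = f_{ij} \comp \psi_{jl}$ after choosing the free middle index to be $j$. This shows every component with first index $i$ is obtained from $f_{ij}$ by post‑composing with the appropriate transition isomorphism of the target clique. Next I would change the first index, transporting along $\theta$. Instantiating the same axiom with a repeated last index so that $\psi_{ll} = \im$ collapses yields $f_{kl} = \theta_{ki} \comp f_{il}$. Substituting the previous identity for $f_{il}$ gives the claimed formula.

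The main obstacle here is essentially bookkeeping rather than mathematics: the only thing to be careful about is selecting the right specializations of the four free indices in the square condition and invoking the unit normalizations $\theta_{ii} = \im$ and $\psi_{jj} = \im$. It is worth remarking that the \emph{determination} statement does not even use that $\theta$ and $\psi$ are isomorphisms — only the unit equations are needed — whereas invertibility would come into play if one additionally wished to check that any prescribed $f_{ij}$ actually extends to a well‑defined clique morphism. Since the proposition asserts only uniqueness of the extension, the explicit formula above suffices, and it matches the earlier observation that a braid clique morphism is fixed by its leftmost component $f_{00}$.
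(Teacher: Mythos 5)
Your proof is correct and follows essentially the same route as the paper, which simply writes down the reconstruction formula $f_{kl} = \theta_{ki}\comp f_{ij}\comp\psi_{lk}^{-1}$ directly from the commuting-square condition (the paper's version has a garbled index and invokes invertibility of $\psi$, whereas your two-step specialization only needs $\theta_{ii}=\im$ and $\psi_{ll}=\im$, a minor but accurate refinement). Nothing further is needed.
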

\begin{proof}
  By the definition, $f_{kl} = \theta_{ki} \comp f_{ij} \comp \psi_{lk}^{-1}$, where we use that the clique is made up of isomorphisms.
\end{proof}

\section{Pseudomonoids}

\begin{theorem}
  \label{ax:th:freydpseudomonoid}
  An \effectfulCategory{} (or monoidal Freyd category) is a pseudomonoid on the monoidal 2-category of promonads with promonad homomorphism, promonad transformations and the \pureTensor{} of promonads.
  A pseudomonoid homomorphism between \effectfulCategories{} is an \effectfulFunctor{}.
\end{theorem}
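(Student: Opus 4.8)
The plan is to exhibit the three pieces of pseudomonoid data—the multiplication 1-cell, the unit 1-cell, and the invertible associator/unitor 2-cells—directly in terms of the effectful structure, using the dictionary already established in the excerpt: promonads are identity-on-objects functors (\Cref{th:promonadidonobjs}), promonad homomorphisms are commuting squares (\Cref{prop:homomorphismcommuting}), promonad modifications are pure natural transformations (\Cref{prop:modificationcylinder}), and the monoidal product on this 2-category is the pure tensor $\ast$ (\Cref{th:universalpure}). Concretely, given an effectful category $\baseV \to \catC$, the carrier 0-cell is the promonad $\catC$ over $\baseV$; the multiplication 1-cell is the promonad homomorphism $\catC \ast \catC \to \catC$ classified by the premonoidal tensor $(\tensor)$, and the unit 1-cell $I \to \catC$ picks out the monoidal unit object together with the unit maps. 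The crucial point is that the universal property of the pure tensor (\Cref{th:universalpure}) is exactly what makes $(\tensor)$ into a \emph{well-defined} promonad homomorphism out of $\catC \ast \catC$: whiskering by pure morphisms from either side must interchange with effectful morphisms from the other, and this is precisely the centrality of the pure (value) morphisms built into the definition of an effectful category.

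First I would make the multiplication explicit. By \Cref{th:universalpure}, to give a promonad homomorphism $\catC \ast \catC \to \catC$ it suffices to give two promonad homomorphisms $A \colon \catC \times \baseV \to \catC$ and $B \colon \baseV \times \catC \to \catC$ that agree appropriately; I would take $A$ and $B$ to be the two whiskering operations $(f,w) \mapsto (f \tensor \im)\comp(\im\tensor w)$ and $(v,g)\mapsto (\im\tensor g)\comp(v\tensor \im)$, whose compatibility with the universal cocone is exactly the centrality condition on $\baseV$. This yields $m = (A \vee B) \colon \catC \ast \catC \to \catC$ and simultaneously recovers the sesquifunctoriality of $\tensor$: the fact that $A$ and $B$ do \emph{not} combine into a single functor on $\catC\times\catC$ is mirrored by the fact that $\catC\ast\catC \neq \catC\times\catC$, which is the whole reason the pure tensor (rather than the product) is the correct monoidal structure.

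Next I would produce the associator and unitor as promonad modifications. By \Cref{prop:modificationcylinder} a promonad modification between the two composite homomorphisms $\catC\ast\catC\ast\catC \to \catC$ is a \emph{pure} natural transformation, so I would take the premonoidal associator $\alpha$ and unitors $\rho,\lambda$—which are central by hypothesis—and observe that centrality is exactly the lifting condition $\alpha_X \lp G(p) = F(p) \rp \alpha_Y$ required of a modification. Naturality at each separate component, together with centrality, gives the full cylinder condition. The pentagon and triangle equations then transfer directly from the premonoidal axioms, since the pure tensor is strictly associative and unital at the level of 1-cells (the bicategory is in fact a strict $2$-category here, as the statement notes), so the only coherence data are these transferred 2-cells. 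For the second assertion I would unfold a pseudomonoid homomorphism using \Cref{prop:homomorphismcommuting}: a 1-cell commuting with the two multiplications up to invertible pure 2-cells is exactly a commuting square of functors $(F_0,F)$ together with pure natural isomorphisms $\mu,\varepsilon$ making $F_0$ monoidal—i.e. an effectful functor in the sense of the definition given earlier. The final consequence for premonoidal categories follows by instantiating $\baseV$ as the centre $\zentre(\catC)$.

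The main obstacle I expect is verifying well-definedness and the 2-cell coherence in the right order: the naive associator for a pseudomonoid must be a modification \emph{of promonad homomorphisms}, and the two sides $(\catC\ast\catC)\ast\catC \to \catC$ and $\catC\ast(\catC\ast\catC)\to\catC$ are built by iterating the universal property of $\ast$, so I must confirm that the premonoidal $\alpha$ genuinely satisfies the cylinder/modification equation against both composites rather than merely being a natural isomorphism of the underlying functors. This is where centrality does the essential work, and it is also where one must be careful that the needed interchanges hold \emph{on the nose} in the pure tensor (by the quotient in its definition) rather than only up to further coherence. Once centrality is correctly identified with the modification lifting condition of \Cref{prop:modificationcylinder}, the remaining pentagon/triangle checks and the homomorphism direction are routine transfers of the effectful axioms.
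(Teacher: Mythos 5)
Your proposal matches the paper's own proof sketch in all essentials: both identify the multiplication 1-cell with the (pre)monoidal tensor as a promonad homomorphism out of $\catC \ast \catC$, and both locate the real content of the theorem in the fact that promonad modifications are \emph{pure} natural transformations, so that the associator and unitors are automatically central and the pentagon/triangle axioms transfer directly. The only difference is direction --- the paper's sketch starts from the pseudomonoid data and reads off the effectful category, whereas you start from the effectful category and assemble the pseudomonoid via the universal property of the pure tensor --- but the dictionary invoked (promonads as identity-on-objects functors, homomorphisms as commuting squares, modifications as pure transformations) is identical, and your version is, if anything, slightly more explicit about how centrality makes the whiskerings into a well-defined map out of $\catC \ast \catC$.
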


\begin{proof}[Proof sketch]
  Consider the data for a pseudomonoid $(\baseV \to \catC,\otimes,I,\alpha,\lambda,\rho)$ of promonads with the \pureTensor{}.
  The promonad $\baseV \to \catC$ gives us the pair of categories that will form the \effectfulCategory{}.
  We have a pair of promonad homomorphisms, $(\otimes) \colon (\baseV \times \baseV \to \catC \ast \catC) \to (\baseV \to \catC)$ and $I \colon (1 \to 1) \to (\baseV \to \catC)$ that make $\baseV$ into a monoidal category and $\catC$ into a premonoidal category with the same unit and tensor.

  Finally, the promonad transformations are natural transformations.
  This means that the associator and unitor 2-cells are natural transformations describing the associators and unitors of the monoidal category $\baseV$. These are the same associators and unitors of the premonoidal category $\catC$. Crucially, because they are in $\baseV$, they are central with respect to every morphism in $\catC$.
\end{proof}

\section{Background material}

\subsection{Monoidal categories}

This section of the appendix has been repurposed from a similar summary of monoidal categories \cite{monoidalstreams}, but it contains only standard material on monoidal categories that we choose to repeat to fix conventions.

\begin{definition}[\cite{maclane78}]
  A \defining{linkmonoidalcategory}{\textbf{monoidal category}},
  $(\catC, \otimes, I, \alpha, \lambda, \rho)$, is a category $\catC$
  equipped with a functor $\tensor \colon \catC \times \catC \to \catC$,
  a unit $\sI \in \catC$, and three natural isomorphisms: the associator $\alpha_{\sA,\sB,\sC} \colon (\sA \tensor \sB) \tensor \sC \cong \sA \tensor (\sB \tensor \sC)$, the left unitor $\lambda_{\sA} \colon \sI \tensor \sA \cong \sA$ and
  the right unitor $\rho_{\sA} \colon \sA \tensor \sI \cong \sA$;
  such that $\alpha_{\sA,\sI,\sB} ; (\im_{\sA} \tensor \lambda_{\sB}) = \rho_{\sA} \tensor \im_{\sB}$ and
  $(\alpha_{A,B,C} \tensor \im) ; \alpha_{A,B \tensor C, D} ; (\im_{A} \tensor \alpha_{B,C,D}) = \alpha_{A\tensor B,C,D} ; \alpha_{A,B,C \tensor D}$.  A monoidal category is \emph{strict} if $\alpha$, $\lambda$ and $\rho$ are identities.
\end{definition}

\begin{definition}[Monoidal functor, \cite{maclane78}]\defining{linkmonoidalfunctor}{}
  Let \[(\catC,\tensor,\sI,\alpha^{\catC},\lambda^{\catC},\rho^{\catC})\mbox{ and } (\catD,\boxtimes,\sJ,\alpha^{\catD},\lambda^{\catD},\rho^{\catD})\] be \hyperlink{linkmonoidalcategory}{monoidal categories}.
  A \defining{linkmonoidalfunctor}{\emph{monoidal functor}} (sometimes called \emph{strong monoidal functor}) is a triple
  $(F,\varepsilon,\mu)$ consisting of a functor $F \colon \catC \to \catD$ and two natural
  isomorphisms $\varepsilon \colon \sJ \cong F(\sI)$ and $\mu \colon F(\sA \tensor \sB) \cong F(\sA) \boxtimes F(\sB)$;
  such that
  \begin{itemize}
    \item the associators satisfy \(
        \alpha^{\catD}_{FA,FB,FC} ; (\im_{FA} \tensor \mu_{B,C}) ; \mu_{A,B \tensor C}
        = (\mu_{A,B} \tensor \im_{FC}) ; \mu_{A \tensor B,C} ; F(\alpha^{\catC}_{A,B,C})\),
    \item the left unitor satisfies \((\varepsilon \tensor \im_{FA}) ; \mu_{I,A} ; F(\lambda^{\catC}_{A}) = \lambda^{\catD}_{FA}\)
    \item the right unitor satisfies \((\im_{FA} \tensor \varepsilon) ; \mu_{A,I} ; F(\rho^{\catC}_{FA}) = \rho^{\catD}_{FA}\).
  \end{itemize}
  A monoidal functor is a \emph{monoidal equivalence} if it is moreover an equivalence of categories.  Two monoidal categories are monoidally equivalent if there exists a monoidal equivalence between them.
\end{definition}

During most of the paper, we omit all associators and unitors from monoidal categories, implicitly using the \emph{coherence theorem} for monoidal categories.

\begin{theorem}[Coherence theorem, \cite{maclane78}]%
  \label{theorem:coherence}
  Every monoidal category is monoidally equivalent to a strict monoidal category.
\end{theorem}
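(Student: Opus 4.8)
The plan is to realize the strictification by the standard \emph{word (or list) construction}. Given a monoidal category $(\catC,\otimes,I,\alpha,\lambda,\rho)$, I would define a strict monoidal category $\catC_{s}$ whose objects are finite lists $(A_{1},\dots,A_{n})$ of objects of $\catC$, and I would fix for each list its left-bracketed tensor $\overline{(A_{1},\dots,A_{n})} = (\cdots(A_{1}\otimes A_{2})\otimes\cdots)\otimes A_{n}$, with the empty list sent to $I$. A morphism $(A_{1},\dots,A_{n}) \to (B_{1},\dots,B_{m})$ in $\catC_{s}$ is \emph{by definition} a morphism $\overline{(A_{1},\dots,A_{n})} \to \overline{(B_{1},\dots,B_{m})}$ in $\catC$; composition and identities are inherited from $\catC$, so $\catC_{s}$ is automatically a category. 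The tensor product on objects is concatenation of lists, which is \emph{strictly} associative and strictly unital, with the empty list as a genuine two-sided unit.

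The crux is to equip this tensor with an action on morphisms that is functorial and strict. For lists $w,v$ there is a \emph{canonical} coherence isomorphism $\gamma_{w,v} \colon \overline{w}\otimes\overline{v} \xrightarrow{\cong} \overline{w\cdot v}$ assembled from associators; given $f \colon \overline{w}\to\overline{w'}$ and $g \colon \overline{v}\to\overline{v'}$ I would set $f \otimes_{s} g := \gamma_{w',v'}\comp(f\otimes g)\comp\gamma_{w,v}^{-1}$. Checking that $\otimes_{s}$ is a bifunctor is routine from naturality of $\alpha$. The delicate point is \emph{strict} associativity on morphisms, $(f\otimes_{s} g)\otimes_{s} h = f\otimes_{s}(g\otimes_{s} h)$, together with strict unitality: these reduce to the statement that any two parallel isomorphisms built out of $\alpha,\lambda,\rho$, their inverses, $\otimes$ and $\comp$ agree. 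This is exactly Mac Lane's coherence theorem.

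I expect the genuine obstacle to be establishing this coherence statement. The approach I would take is a normalization argument: regard the iterated tensors as objects of the free monoidal category on the underlying objects as generators, and show that every such object admits a \emph{unique} structural isomorphism to its left-bracketed normal form, by orienting the associator and unitor as rewrite rules and using the pentagon and triangle equations to prove confluence. Uniqueness of these normalizing isomorphisms then forces any two parallel structural isomorphisms to coincide, which is precisely what $\otimes_{s}$ needs. This combinatorial core is where essentially all of the work lies.

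Finally, I would verify that the bracketing assignment $\Phi \colon \catC_{s} \to \catC$, $w \mapsto \overline{w}$, $f \mapsto f$, is a monoidal equivalence. It is fully faithful by construction, since its hom-sets are literally those of $\catC$, and essentially surjective because every object $A$ equals $\overline{(A)}$; hence $\Phi$ is an equivalence of categories. Its monoidal structure isomorphisms are the canonical $\gamma_{w,v}$ together with $\lambda$ and $\rho$ for the unit, and the monoidal-functor axioms for $\Phi$ follow once more from coherence. This exhibits $\catC_{s}$ as a strict monoidal category monoidally equivalent to $\catC$, as required. An alternative, coherence-free route would embed $\catC$ into the strict monoidal category of right $\catC$-module endofunctors via $A \mapsto (A\otimes{-})$, where full faithfulness is forced by the module compatibility condition; I would nonetheless keep the word construction as the primary argument, since it most directly exhibits the strict replacement.
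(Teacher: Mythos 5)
The paper does not prove this statement at all: it appears in the background appendix, stated with a citation to Mac Lane precisely so that the coherence theorem can be used as a black box, so there is no in-paper argument to compare against. Your proposal is the standard strictification proof from the literature the paper cites, and it is correct in outline. The load-bearing choices are all right: defining morphisms of $\mathcal{C}_{s}$ to literally be $\mathcal{C}$-morphisms between left-bracketed evaluations makes the comparison functor $\Phi$ fully faithful by construction; concatenation gives strictness on objects for free; and you correctly identify that strict bifunctoriality, associativity and unitality of $\otimes_{s}$ reduce to uniqueness of parallel canonical isomorphisms. One point worth praising explicitly: you localize that uniqueness statement in the \emph{free} monoidal category on the set of objects, rather than asserting ``any two parallel structural isomorphisms in $\mathcal{C}$ agree'' --- the latter is false in general, because objects of $\mathcal{C}$ can coincide accidentally, so the naive formulation would be a genuine gap; your formulation avoids it. Two small technical remarks: you should define $\gamma_{w,v}$ by an explicit recursion (say on the right-hand list, with $\gamma_{w,()} = \rho$) so that it is well defined \emph{before} invoking coherence, which is then used only to prove the cocycle and unit equations it satisfies; and the confluence argument needs the triangle axiom to resolve the critical pairs involving $\lambda$ and $\rho$, not only the pentagon for the $\alpha$--$\alpha$ pair --- you mention both axioms, so this is a matter of bookkeeping rather than a missing idea. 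The alternative route via right $\mathcal{C}$-module endofunctors that you mention is likewise a standard, correct proof.
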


\begin{definition}[Symmetric monoidal category, \cite{maclane78}]
  A \defining{linksymmetricmonoidalcategory}{\emph{symmetric monoidal category}}
  $(\catC, \otimes, I, \alpha, \lambda, \rho, \sigma)$ is a monoidal category
  $(\catC, \otimes, I, \alpha, \lambda, \rho)$ equipped with a braiding
  $\sigma_{A,B} \colon A \otimes B \to B \otimes A$, which satisfies the hexagon
  equation
  \[\alpha_{A,B,C} ; \sigma_{A,B \tensor C} ; \alpha_{B,C,A} = (\sigma_{A,B} \tensor \im) ; \alpha_{B,A,C} ; (\im \tensor \sigma_{A,C})\]
  and additionally satisifes $\sigma_{A,B} ; \sigma_{B,A} = \im$.
\end{definition}

\begin{definition}[\cite{maclane78}]
  A \defining{linksymmetricmonoidalfunctor}{symmetric monoidal functor}
  between two \hyperlink{linksymmetricmonoidalcategory}{symmetric monoidal categories} $(\catC, \sigma^{\catC})$
  and $(\catD, \sigma^{\catD})$ is a monoidal functor $F \colon \catC \to \catD$ such that $\sigma^{\catD} ; \mu = \mu ; F(\sigma^{\catC})$.
\end{definition}

\subsection{Sesquifunctors}

\begin{definition}
  \defining{linksesquifunctor}
  Let $\catA_{1},\dots,\catA_{n}$ and $\catB$ be categories.
  A \emph{sesquifunctor} $T \colon \catA_{1}, \dots, \catA_{n} \to \catB$ \cite{street1996categorical} is an assignment on objects and morphisms that is independently functorial on each variable. That is, the sesquifunctor is given by a family of functors
  \[T_{i}(X_{1},\dots,\bullet_{i}\dots,X_{n}) \colon \catA_{i} \to \catB\mbox{ for } i = 1,\dots,n.\]
  These functors coincide on objects, in that $T(X_{1},\dots,X_{n})$ is uniquely determined independently of the $T_{i}$ we use to define it.
\end{definition}

\begin{remark}
  Sesquifunctors form a multicategory with ordinary composition, which preserves single-variable functoriality. Moreover, they form a 2-multicategory with transformations between them.
\end{remark}

\begin{proposition}
  The multicategory of sesquiprofunctors is representable by the funny tensor product of categories $(\funny) \colon \CAT \times \CAT \to \CAT$.
  The funny tensor product of two categories, $\catC \funny \catD$, is computed as the following pushout, where $\catC_{0}$ and $\catD_{0}$ are the discrete categories on the objects of $\catC_{0}$ and $\catD_{0}$.
  \begin{figure}[H]
    \centering
    \begin{tikzcd}
      \catC_0 \times \catD_{0} \rar \dar &
      \catC \times \catD_0 \dar \\
      \catC_0 \times \catD  \rar &
      \catC \funny \catD
    \end{tikzcd}
  \end{figure}
  Explicitly, objects of the funny tensor product are pairs of objects. Morphisms are either morphisms in $\catC$, in $\catD$, or formal compositions of both, as it happens with the coproduct of monoids. See Weber, \cite{weber13}.
\end{proposition}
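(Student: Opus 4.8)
The plan is to prove representability by establishing, naturally in the target $\catE$, a bijection between sesquifunctors out of a list $\catC_{1},\dots,\catC_{n}$ and ordinary functors out of the iterated funny tensor $\catC_{1}\funny\dots\funny\catC_{n}$; I treat the multicategory in question as the one of sesquifunctors introduced in the Remark above (the argument for the profunctorial variant is identical, replacing $\CAT$ throughout by the relevant double category). The first step is to unpack the displayed pushout. The two maps $\catC_{0}\times\catD_{0}\to\catC\times\catD_{0}$ and $\catC_{0}\times\catD_{0}\to\catC_{0}\times\catD$ are the object-discrete inclusions, so the pushout glues $\catC\times\catD_{0}$ and $\catC_{0}\times\catD$ along their common objects. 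Computing it explicitly — a pushout of categories is a quotient of the free category on the disjoint union of generating arrows — yields exactly the stated description: objects are pairs $(X,Y)$, and every morphism is a formal composite of ``horizontal'' arrows $(f,\im_{Y})$ with $f$ in $\catC$ and ``vertical'' arrows $(\im_{X},g)$ with $g$ in $\catD$, subject only to the composition and identity relations inherited from $\catC$ and from $\catD$. Crucially, no interchange relation $(f,\im_{Y})\comp(\im_{X'},g)=(\im_{X},g)\comp(f,\im_{Y'})$ is imposed; this is precisely what distinguishes $\catC\funny\catD$ from the cartesian product.

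Next I would establish the binary universal property. By the universal property of the pushout, a functor $\catC\funny\catD\to\catE$ is the same datum as a cocone: a pair of functors $\catC\times\catD_{0}\to\catE$ and $\catC_{0}\times\catD\to\catE$ agreeing on $\catC_{0}\times\catD_{0}$. Since $\catD_{0}$ and $\catC_{0}$ are discrete, currying turns the first into a family of functors $(\bullet\mapsto T(\bullet,Y))\colon\catC\to\catE$ indexed by objects $Y\in\catD$, and the second into a family $(\bullet\mapsto T(X,\bullet))\colon\catD\to\catE$ indexed by objects $X\in\catC$; agreement on $\catC_{0}\times\catD_{0}$ says exactly that the two families assign the same value $T(X,Y)$ to each pair of objects. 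This is verbatim the data of a sesquifunctor $T\colon\catC,\catD\to\catE$ as defined above: independently functorial in each variable, coinciding on objects. Hence $\CAT(\catC\funny\catD,\catE)\cong\mathrm{Sesqui}(\catC,\catD;\catE)$, naturally in $\catE$, which is binary representability; the nullary case is handled by the terminal category $1$, since a nullary sesquifunctor into $\catE$ is just a choice of object, i.e. a functor $1\to\catE$.

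To obtain representability of the whole multicategory I would iterate. The same pushout computation for an $n$-fold funny tensor shows $\catC_{1}\funny\dots\funny\catC_{n}$ has $n$-tuples as objects and single-variable morphisms as generators with no interchange, and that $\funny$ is associative up to a canonical isomorphism. Reading off the universal property exactly as above gives $\CAT(\catC_{1}\funny\dots\funny\catC_{n},\catE)\cong\mathrm{Sesqui}(\catC_{1},\dots,\catC_{n};\catE)$, again naturally in $\catE$. It then remains to check that these bijections are compatible with multicategory composition — that substituting sesquifunctors corresponds to composing the representing functors along the associativity isomorphisms of $\funny$ — which, together with invertibility of those associators, is the standard criterion (Hermida; Weber \cite{weber13}) for a multicategory to be represented by a tensor.

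The main obstacle is this last compatibility step. The delicate point is that composition of sesquifunctors is by substitution, and one must verify that it tracks exactly through the funny tensor: in particular that the \emph{absence} of the interchange law in $\catC\funny\catD$ is exactly what makes substitution well defined and functorial on the representing side, since an imposed interchange would collapse distinct composite sesquifunctors. One must also confirm that the associativity isomorphisms of $\funny$ are coherent (satisfy the pentagon), so that the result is a genuine representation rather than a mere levelwise bijection of hom-sets. The cleanest route is to verify the $n$-ary universal property directly, as sketched, and then invoke the equivalence between representable multicategories and (non-symmetric) monoidal structures to package the coherence.
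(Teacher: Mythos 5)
The paper never actually proves this proposition: it appears as background material and is discharged entirely by the citation to Weber, so there is no in-paper argument to compare against, and your proposal supplies the missing proof. What you write is correct and is essentially the standard argument the citation points to. Your explicit computation of the pushout is right, and you implicitly handle the one genuinely delicate point: pushouts in $\CAT$ are not in general computed naively as quotients of free categories, but here the span is along the discrete category $\catC_0 \times \catD_0$ with both legs bijective on objects, so the pushout really is the amalgamated free product (objects are pairs, morphisms are alternating composites of one-variable arrows, with relations only from each factor and in particular no interchange) --- it would strengthen the write-up to say explicitly that discreteness of the apex is what licenses this. Your currying argument for the binary universal property, $\CAT(\catC \funny \catD, \catE) \cong \mathrm{Sesqui}(\catC,\catD;\catE)$, is exactly the right reading of the pushout cocone, and your treatment of the statement's ``sesquiprofunctors'' as a slip for the sesquifunctors of the preceding remark matches the paper's evident intent. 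The only places where you sketch rather than prove are the $n$-ary universal property, the coherence (pentagon) of the associators, and the compatibility of the bijections with multicategorical substitution; these are precisely the checks that Hermida's criterion for representable multicategories packages, as you note, and the binary case you do work out contains all the ideas, so the iteration is routine. In short: the paper buys brevity by outsourcing to Weber, while your route buys self-containedness at the cost of the page of verification the paper avoided; as a blind reconstruction it is the expected proof and has no substantive gap.
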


\subsection{Double categories}

\begin{definition}[Monoids and promonoids]
  A \emph{monoid} in a double category is an arrow $T \colon \catA \to \catA$ together with cells $m \in \hom(M \otimes M;1,1;M)$ and $e \in \cell(1;1,1;M)$, called multiplication and unit, satisfying unitality and associativity.
  \begin{figure}[H]
    \centering
    \includegraphics[scale=0.6]{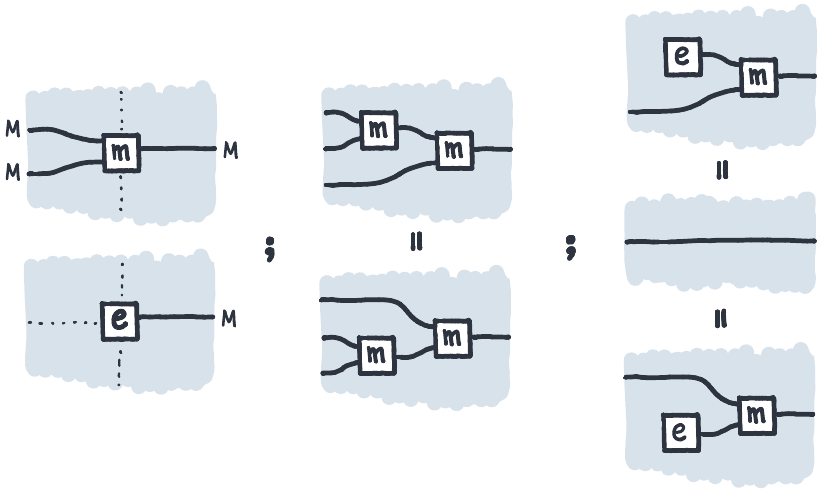}
    \caption{Data and axioms of a monoid in a double category.}
  \end{figure}
  A \emph{promonoid} in a double category is a proarrow $M \colon \catA \to \catA$ together with cells $m \in \cell(1;M \otimes M,M,1)$ and $e \in \cell(1;1,M;1)$, called promultiplication and prounit, satisfying unitality and associativity.
  \begin{figure}[H]
    \centering
    \includegraphics[scale=0.6]{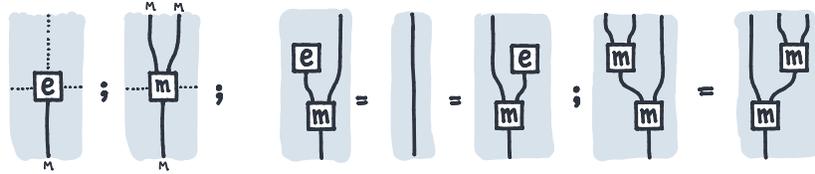}
    \caption{Data and axioms of a promonoid in a double category.}
  \end{figure}
  Dually, we can define \emph{comonoids} and \emph{procomonoids}.
\end{definition}

A monad is a monoid on the category of categories, functors and profunctors $\CAT$.

\subsection{Monoidal bicategories}

\begin{definition}\label{def:pseudomonoid}
  \defining{linkpseudomonoid}{}
  In a monoidal bicategory, a \emph{pseudomonoid} over a 0-cell $M$ is a pair of 1-cells, $M \boxtimes M \to M$ and $I \to M$, together with the following triple of invertible 2-cells representing associativity and unitality (\Cref{fig:pseudomonoid}), and satisfying the pentagon and triangle equations (\Cref{fig:pseudomonoidaxioms}).
  \begin{figure}[H]
    \centering
    \includegraphics[scale=0.6]{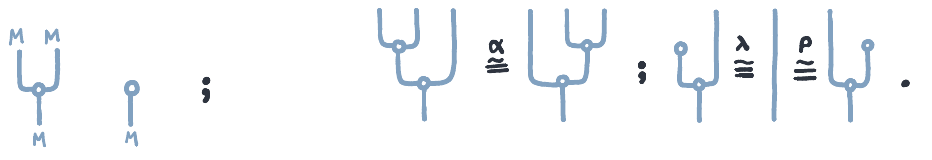}
    \caption{Data for a pseudomonoid.}
  \end{figure}
  \begin{figure}[H]
    \centering
    \includegraphics[scale=0.6]{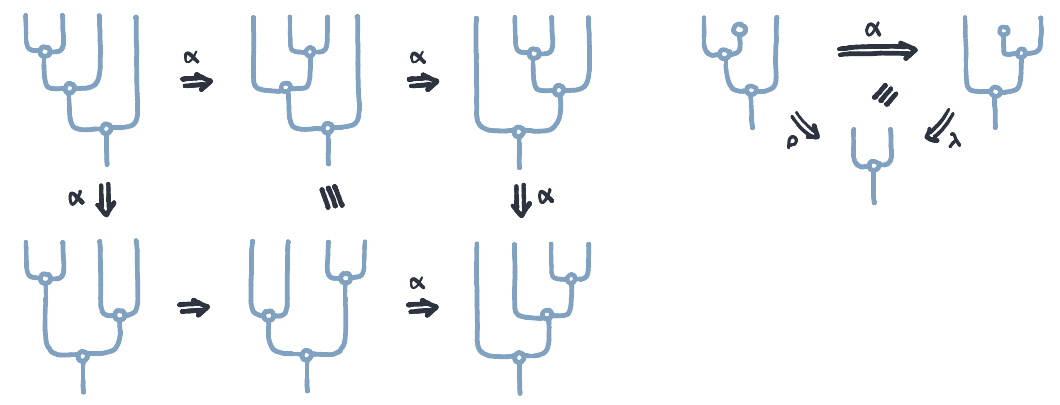}
    \caption{Pentagon and triangle axioms for a pseudomonoid.}
    \label{fig:pseudomonoidaxioms}
  \end{figure}
A \emph{symmetric} \pseudomonoid{} is a \pseudomonoid{} endowed with an invertible 2-cell representing commutation (\Cref{fig:commutator}), and satisfying symmetry and the two hexagon equations (\Cref{fig:hexagons}).
  \begin{figure}[H]
    \centering
    \includegraphics[scale=0.6]{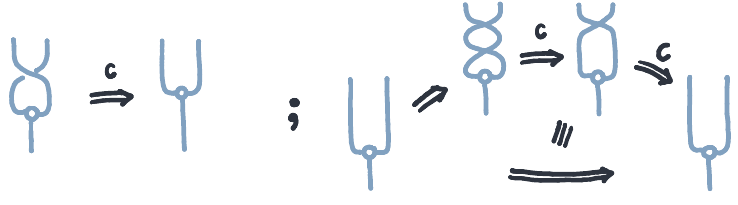}
    \caption{Commutator and symmetry for a pseudomonoid.}
    \label{fig:commutator}
  \end{figure}
  \begin{figure}[H]
    \centering
    \includegraphics[scale=0.6]{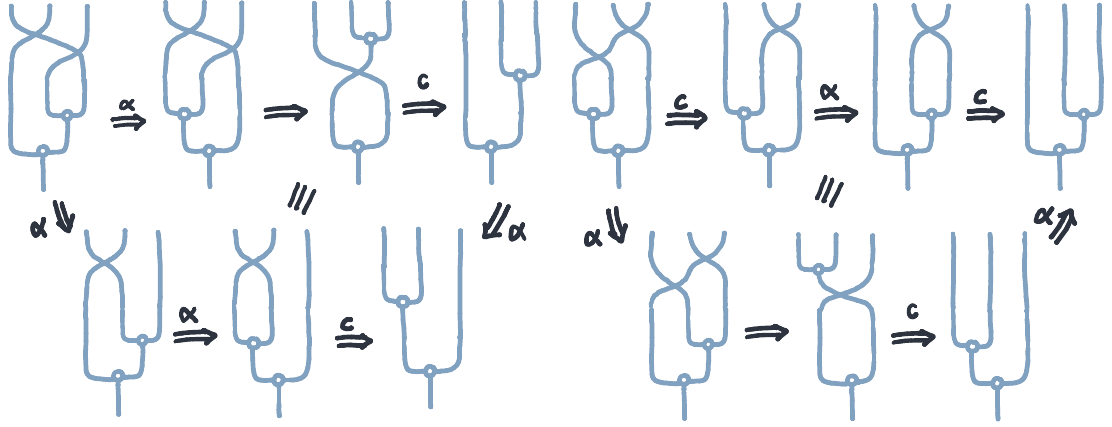}
    \caption{Hexagon equations for a symmetric pseudomonoid.}
    \label{fig:hexagons}
  \end{figure}
\end{definition}

\begin{definition}[Homomorphism of pseudomonoids]
  A homomorphism of pseudomonoids is given by a 1-cell between their underlying 0-cells and the following invertible 2-cells, representing preservation of the multiplication and the unit (\Cref{fig:pseudomonoidhom}), and satisfying compatibility with associativity and unitality (\Cref{fig:pseudomonoidhomaxioms}).

  \begin{figure}[H]
    \centering
    \includegraphics[scale=0.6]{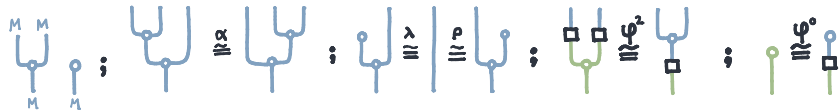}
    \caption{Data for a pseudomonoid homomorphism.}
  \end{figure}

  \begin{figure}[H]
    \centering
    \includegraphics[scale=0.6]{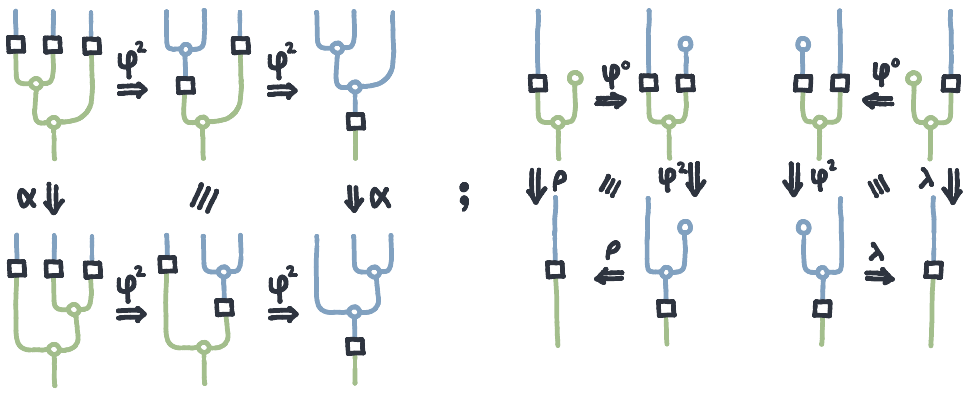}
    \caption{Axioms for a pseudomonoid homomorphism.}
    \label{fig:pseudomonoidhomaxioms}
  \end{figure}
\end{definition}

\end{document}